\DeclareMathOperator*{\E}{\mathbb{E}}
\definecolor{darkblue}{rgb}{0,0,0.55}
\definecolor{darkred}{rgb}{0.5,0,0}
\newcommand\Bheadfont{\fontsize{14pt}{\baselineskip}\selectfont}
\definecolor{darkblue}{rgb}{0,0,0.55}
\definecolor{darkred}{rgb}{0.5,0,0}
\titleformat{\section}[hang] {\normalfont\sc\color{darkblue}\Bheadfont} {\thesection\hskip0.618em}{0em}{}
\titleformat{\subsection}[runin]
{\normalfont\sc\color{darkblue}} {\thesubsection\hskip0.618em}{0em}{}
\titleformat{\subsubsection}[runin]
{\normalfont\sc\color{darkblue}} {\thesubsubsection\hskip0.618em}{0em}{}
\theoremstyle{plain} 
\theoremstyle{plain} \newtheorem{proposition}{Proposition}
\theoremstyle{remark} 
\theoremstyle{plain} \newtheorem{lemma}[proposition]{Lemma}
\theoremstyle{plain} 
\theoremstyle{plain} 
\theoremstyle{plain} 
\theoremstyle{plain} 
\newcommand{\theoremname}{Theorem}
\newcounter{thm}
\newenvironment{thm}{%
  \par\medskip\refstepcounter{thm}%
  \noindent\textcolor{darkred}{\textbf{Theorem \thethm}}:}{\par\medskip}
\begin{document}
\title{\vspace*{1in}{\sc{\large{Approximate Aggregation in the Neoclassical Growth Model with Idiosyncratic Shocks}\thanks{Katz acknowledges support from NSF grant DMS 1266104.}}}}
\author{Karsten O. Chipeniuk\thanks{Department of Mathematics, California Institute of Technology, \texttt{kchipeni@caltech.edu}} \; \; \; \; 
Nets Hawk Katz\thanks{Department of Mathematics, California Institute of Technology, \texttt{nets@caltech.edu}} \;\;\;\; 
Todd B. Walker\thanks{Department of Economics, Indiana University, \texttt{walkertb@indiana.edu}}}
\date{March 2014\\Preliminary and Incomplete} 
\maketitle

\vspace{1in}
\begin{abstract}
	We provide an explicit characterization of aggregation in the neoclassical growth model with aggregate shocks and uninsurable employment risk. We show that there are two restrictions on the employment shock process that must be satisfied in order for approximate aggregation to hold. First, the probability of unemployment must be positive for each agent every period. This ensures a strong precautionary savings motive. Second, like agents must have similar future prospects. That is, agents with similar employment shocks and wealth distributions cannot have drastically different employment paths. The model solution requires the distribution of wealth as a relevant state variable, and hence the curse of dimensionality must be confronted. We sidestep this thorny issue by introducing a Walrasian auctioneer that communicates the optimal amount of aggregate capital in each period for every shock to the agents. 

\bigskip

\noindent Keywords: Aggregation, Heterogeneous Agents, Incomplete Markets\\
\noindent JEL Classification Numbers: E21, D52, E24
\end{abstract}

\bgroup

\let\footnoterule\relax 

\thispagestyle{empty} 
\egroup
\vspace{0.5in} 
\phantom{s} 

\pagebreak 
\onehalfspacing
\setcounter{page}{1}
\section{Introduction}

The one-sector neoclassical growth model with uninsurable idiosyncratic shocks has become a workhorse in macroeconomics. Early versions of the model were used to examine the role of incomplete insurance on the permanent income hypothesis, monetary and fiscal policy, the cost of business cycles, asset pricing, etc. [\cite{Bewley:77}, \cite{Bewley:86}, \cite{Imrohoroglu:89}, \cite{Huggett:93}, \cite{Aiyagari1994}, \cite{MarcetSingleton:99}]. These papers spawned a vast literature which has demonstrated the importance of the incomplete markets assumption, and led to the popularity of this model as an important research tool.  

Despite the popularity, few analytical results have been established with respect to the model's properties. Perhaps the best example of this pertains to aggregation. The key message of \cite{KrusellSmith1998} (KS, hereafter) is that the neoclassical growth model with idiosyncratic risk and aggregate shocks features ``approximate aggregation." Using a numerical approach that is now well known, KS show that most agents can self-insure through the accumulation of capital. These agents have nearly affine policy functions in the state variables, which permits the aggregation of \cite{Gorman:53,Gorman:61}.  In equilibrium, there exists a small fraction of agents who are close to their borrowing constraints, but their overall contribution to the aggregate capital stock is so small that it is nearly negligible; hence \emph{approximate} aggregation attains. While this aggregation result is a robust numerical finding, a formal treatment is missing. This paper serves to fill this void.

Our model is that of KS with a finite number of agents and time periods. The primary challenge in solving this model is that the future price of capital depends upon the asset holdings and employment status of each agent. Therefore, the distribution of wealth is a relevant state variable. As the number of agents increases, the curse of dimensionality takes hold and the problem becomes untenable. 

In order to circumvent this issue, we introduce a Walrasian auctioneer who sets the level of aggregate capital in advance for all time periods and all outcomes of the shocks. This sequence is then communicated to the agents. Armed with this knowledge, agents are free to make investment-consumption allocations without having to know the distribution of wealth. We show that the typical agent's problem has a unique solution, but we do not take up the challenge of solving the Walrasian auctioneer's problem in this paper. We therefore can make no claims about the existence or uniqueness of our equilibrium. However, this setup does allow us to provide a complete characterization of aggregation. 

Our main theorem delivers conditions under which the model of KS approximately aggregates. We show that there are two important restrictions on the employment shock necessary to achieve aggregation. First, in every period, the probability of unemployment must be positive. Capital must serve the dual role of being a store of wealth, facilitating intertemporal substitution of consumption, and providing insurance against employment shocks. Precautionary savings must be sufficiently high in order for the latter condition to be met effectively. The risk of unemployment each period ensures that this is the case. Second, agents of a similar background must have similar future prospects.  Agents with nearly identical wealth profiles but very different employment prospects will not have similar consumption profiles. We place a bound on the extent to which agents' employment paths are dissimilar. These restrictions are the only ones imposed on the employment shocks. Thus, our results nest those of KS.

\section{The Environment}

\subsection{Time} 
Time is discrete and finite, consisting of $T$ periods and indexed by $t$. We will use the convention that a new date commences with the arrival of new information. Any variable known or chosen at date $t$ will be indexed by $t$. 

\subsection{Firms} Factor and product markets are perfectly competitive. The aggregate production technology is Cobb-Douglas, $Y_t=F(K_{t-1},L_t)=z_t K_{t-1}^\alpha L_t^{1-\alpha}$,
with $\alpha \in [0,1]$ and aggregate productivity shock, $z_t$.  Profit maximization delivers the rental rate of capital and the wage rate as 
\begin{gather}
R_t = \alpha z_t  \bigg(\frac{K_{t-1}}{L_t}\bigg)^{\alpha-1} \label{eq:RentalRate}\\
w_t =  (1-\alpha) z_t\bigg(\frac{K_{t-1}}{L_t}\bigg)^\alpha  \label{eq:WageRate}
\end{gather}

\subsection{Households}
There are $N$ households indexed by $j$ that live for $T$ periods. Each household values consumption according to 
\begin{gather}
U(c_1,...,c_T) = \mathbb{E}_0 \sum_{t=1}^T \beta^{t-1} u(c_t) \label{eqn:Utility}
\end{gather}
where $\beta \in (0,1)$ is the discount factor. We assume that the instantaneous utility function takes the form of constant relative risk aversion (CRRA) $u(c_t) = (c_t^{1-\sigma}-1)/(1-\sigma)$, where it is understood 
that $u(c_t)=\log c_t$ for $\sigma=1$. 

Agents are endowed with one unit of time each period and do not value leisure. The household's units of labor supplied to the market in period $t$, $\xi_t$, will be stochastic and lie on the unit interval to allow for unemployment, full employment and under-employment. Let the random variable $e_{j,t} \in [0,1]$ denote the share of the wage bill that belongs to household $j$ at time $t$.  The possible values of $e_{j,t}$ and the probabilities that they occur constitute the employment shock. These employment shocks will have a few restrictions but will otherwise be general enough to nest the standard assumptions found in the literature [e.g., KS].  For now, it is enough to know that $e_{j,t}$ will be allowed to depend on both previous aggregate states $z_{t'}$ and employment states $e_{j,t'}$, $t'\leq t$.

Households have access to an asset, $a_t$, with rate of return on time-$t$ holdings of $r_t$. The flow budget constraint for household $j$ can be written as $c_{j,t} = w_t L_t e_{j,t} + (1+r_t) a_{j,t-1} - a_{j,t}$, where $a_{j,t}$ is the $t$-period asset holding for household $j$ that yields return $r_{t+1}$.  We assume that the agents cannot borrow and therefore restrict asset holdings to be strictly positive. That is, we impose the natural borrowing limit.    Market clearing will impose $a_t = k_t$ and we can rewrite the budget constraint as 
\begin{gather}
c_{j,t} =w_t L_t e_{j,t} + (1+R_t-\delta) k_{j,t-1}- k_{j,t} \label{eqn:BC1}
\end{gather}
where  $\delta \in (0,1)$ is the rate of depreciation of capital, and the household's return on asset holdings is $r_t = R_t-\delta$.

\subsection{The Market Arrangement} 

In order to make their decisions rationally, the individual agents must account for the
choices of their peers. This is important because the aggregate capital stock determines
the price paid for capital, and hence the return on investment. The aggregate capital stock will be a function of the wealth of each agent. A fully rational approach would have each agent tracking the distribution of wealth over time, as a relevant aggregate state variable.  If $N$ is ``big", the curse of dimensionality takes hold and this becomes untenable.

Instead, we introduce a Walrasian auctioneer (henceforth, the auctioneer) 
to mediate between the agents. She is the only one who needs or uses knowledge
of the distribution of wealth. Her job is to set the aggregate levels of the capital stock
$K_{t}$, $1\leq t\leq T$ in advance for all time periods and all outcomes of the 
shocks. The auctioneer announces her choices to each of the agents. They accept her forecasts as gospel. This is the extent of their bounded rationality.

There is only one restriction on how the auctioneer may set her forecasts. They
must invariably come true. That is, for any $t$ and any outcome of preceding shocks,
it must be the case that $K_{t}= \sum_{j=1}^N k_{j,t},$.
Here the individual investments on the right are the solutions to the agents' problems
and the aggregate investment on the left is the auctioneer's forecast. We refer to the
problem of ensuring that these equations hold as {\it the auctioneer's problem}. The
set of equations is determined: there is one unknown aggregate and one equation for
each non-terminal node in the tree of outcomes for the shocks. This is in the spirit of
a general equilibrium. Each node, even though it may not be realized, plays the role of
a good. The auctioneer must set the price so that demand matches supply. A solution of
the auctioneer's problem vindicates the agents' bounded rationality. They were right
to believe.

Each agent optimizes the expected discounted utility of his
consumption taking aggregate capital as given. We refer to this as {\it the agent's problem}. The agent's problem has a unique solution which is entirely determined by the agent's initial endowment together with the auctioneer's forecast.

One way of thinking of the interaction between the agent's problem and the auctioneer's problem is the following: one can consider a recursion in which the auctioneer fixes aggregates $K_t$ for all possible time periods and all possible outcomes of preceeding shocks.  The households then solve the agent's problem, optimizing expected discounted utility, taking the auctioneer's forecasts as given.  Each household $j$ thereby finds a unique solution consisting of random variables $\{k_{j,t},c_{j,t}\}$, which corresponds to its desired level of investment and corresponding consumption in period $t$ for each outcome of the shocks.  The auctioneer thereby obtains a listing of desired aggregates by adding together the values of $k_{j,t}$ across households.  If the desired aggregates are equal to the originally fixed aggregates, the auctioneer has solved her problem.  Otherwise, she will adjust her forecast and try again.

\subsection{Competitive Equilibrium} A competitive equilibrium is an allocation 
\begin{gather}
 \{(k_{j,t}, c_{j,t})_{j\in[1,N]}, K_t, L_t \}_{t=1}^T
\end{gather}
and a price path 
\begin{gather}
 \{R_{t}, w_{t} \}_{t=1}^T
\end{gather}
such that 
\begin{enumerate}
\item Given  $\{R_{t}, w_{t}, K_{t}\}_{t=1}^T$, the sequence $\{(k_{j,t}, c_{j,t})\}_{t=1}^T$ maximizes \eqref{eqn:Utility} subject to the constraints \eqref{eqn:BC1} for every $j$. 
\item Given  $\{R_{t}, w_{t} \}_{t=1}^T$, the sequence $\{K_t, L_t \}_{t=1}^T$ maximizes the firm's profit for every $t$. 
\item The capital and labor markets clear in each period. That is, $\sum_{j=1}^N k_{j,t} = K_t$,  $R_t = r_t-\delta$, and $\sum_{j=1}^N \xi_{j,t} = L_t$ for every $t$. 
\end{enumerate}

\subsection{Recursive Formulation}

In what follows, we impose complete depreciation of capital ($\delta=1$).  This assumption merely simplifies the language for the present discussion, and we will see below that we can repeat the same arguments with incomplete depreciation by reinterpreting the problem in terms of certain {\it effective variables}.

Let $\Omega_{t}$ denote the fraction of current output invested in period $t$ and let $s_{j,t}$ be household $j$'s share of that investment.  In the particular case of total depreciation ($\delta=1$), using the constancy of factor shares of the Cobb-Douglas production technology we can write household $j$'s budget constraint in terms of shares of current output $Y_t$,
\begin{gather}
c_{j,t} = [(1-\alpha) e_{j,t} + \alpha s_{j,t-1} - s_{j,t}\Omega_{t}]Y_t  \qquad 1\leq t\leq T  \label{eqn:BC}
\end{gather}

We denote the $j$th agent's initial share of aggregate wealth $Y_1$ by $\omega_{j,1}$:
\begin{gather}
\omega_{j,1} = (1-\alpha) e_{j,1} + \alpha s_{j,0},
\end{gather}
where we take $e_{j,1}$ and $s_{j,0}$ to be an initial endowment of labor and capital made known prior to any decision being made.  

When $T=1$, the solution of the agent's problem is straightforward.  There is no value gained by saving, so everything is consumed: $s_{j,1}=0$ for every $j$.  Using this fact, when $T=2$, the agent's problem can be written
\begin{gather}
 \max_{\{c_{t}\}_{t=1}^T} \left(\frac{c_1^{1-\sigma}}{1-\sigma}+\beta \E\frac{c_2^{1-\sigma}}{1-\sigma}\right)\notag\\
\text{subject to } \qquad c_{1}=[\omega_{j,1} - s_{j,1}\Omega_{1}]Y_{1},\quad  c_2 = [(1-\alpha) e_{j,2} + \alpha s_{j,1}]Y_2 \notag
\end{gather}
Substituting in the constraints, the problem becomes one of choosing the share $s_{j,1}$ of aggregate investment $\Omega_1 Y_1$ (as ordained by the auctioneer) that the agent will claim. Each agent will optimally chose his share of aggregate investment. Out of equilibrium, the share chosen by the agent might be subject to wishful thinking, in that such an agent solving his problem might come to the conclusion that he would like to hold more of the aggregate than the auctioneer has made available ($s_{j,1} > 1$). However, the auctioneer is bound by market clearing and such a situation indicates that equilibrium has not been achieved, insofar as the auctioneer's problem is not solved.

A solution $s_{j,1}=f_2(\omega_{j,1},e_{j,2},z_2,\Omega_{1},Y_1)$ of the above problem indicates a value function
\begin{gather}
 V_2 (\omega_{j,1};e_{j,2},z_2,\Omega_1,Y_1) = \frac{[(\omega_{j,1}-f_2(\omega_{j,1},e_{j,2},z_2,\Omega_{1},Y_{1})\Omega_{1})Y_1]^{1-\sigma}}{1-\sigma}\notag\\
\qquad\qquad\qquad\qquad\qquad\qquad\qquad\qquad+\qquad\beta \E\frac{[(\alpha f_2(\omega_{j,1},e_{j,2},z_2,\Omega_{1},Y_{1})+(1-\alpha)e_{j,2})Y_2]^{1-\sigma}}{1-\sigma}\notag
\end{gather}
Notice that $V_{2}$ contains a dependence on initial output, the fraction of this output held by the agent, the fraction of the aggregate to be invested, and the random variables representing the future shocks. 

When $T=3$, then, the agent seeks to maximize
\begin{gather}
 \frac{[(\omega_{j,1}-s_{j,1}\Omega_{1})Y_1]^{1-\sigma}}{1-\sigma}
+ \beta\E V_2 (\alpha s_{j,1} + (1-\alpha)e_{j,2};e_{j,3},z_3,\Omega_2,Y_2)\notag
\end{gather}
where the second term is the discounted expected future value given by the two period value function for an agent holding a share of output $Y_2$ as determined by investment returns $\alpha s_{j,1}$ plus employment earnings $(1-\alpha)e_{j,2}$ at the start of period 2.  

The solution of the above problem now yields a decision rule 
\begin{gather}
s_{j,1}=f_3(\omega_{j,1},\{e_{j,t}\}_{t=2}^3,\{z_t\}_{t=2}^3,\{\Omega_t\}_{t=1}^2,Y_1) 
\end{gather}
and corresponding value function 
\begin{gather}
 V_{3}(\omega_{j,1};\{e_{j,t}\}_{t=2}^3,\{z_t\}_{t=2}^3,\{\Omega_t\}_{t=1}^2,Y_1).
\end{gather}

Inducting on this procedure, in the case of a general (but finite) number of periods $T$, we obtain a value function
\begin{gather}
 V_{T}(\omega;\{e_{j,t}\}_{t=2}^T,\{z_t\}_{t=2}^T,\{\Omega_t\}_{t=1}^{T-1}, Y_1) =  \max_{s}\frac{((\omega_{j,1} - s_{j,1} \Omega_1)Y_1)^{1-\sigma}}{1-\sigma} \qquad\qquad\qquad\qquad\notag\\
 \qquad\qquad\qquad\qquad\qquad\qquad + \qquad \beta \E V_{T-1}[(1-\alpha) e_{j,2} + \alpha s_{j,1};\{e_{j,t}\}_{t=3}^T,\{z_t\}_{t=3}^T,\{\Omega_t\}_{t=2}^{T-1}, Y_2]. \label{eqn:RecursiveCRRA}
\end{gather}

An important simplification occurs in the case of $\log$ utility.  Namely, the agent's decisions are independent of the initial aggregate $Y_1$, so that we may reformulate the problem in such a way that the value function is also independent of this quantity. This result is reminiscent of the optimal portfolio literature in which agents with homothetic utility functions choose an allocation between risky and riskless assets that is independent of wealth level (see  \cite{Back2010asset}).

 Figure \ref{fig:Walrasian} depicts an event-tree view of the aggregate economy, while figure \ref{fig:agent} is the counterpart for the typical agent. As noted above and in the figures, variables dated $t$ are chosen or known within the period. At the beginning of the period the shocks are realized, then production occurs. The period concludes with the investment-consumption choice.     
 \begin{figure}[!h]
 \centering
  \begin{pspicture}[showgrid=false](11,4)
 \rput(2,2){\Rnode{A}{$Y_t$}}
 \rput(4,3){\Rnode{B}{$K_t$}}
 \rput(4,1){\Rnode{C}{$C_t$}}
 \rput(6.25,0.5){\Rnode{D}{$z_{t+1}$}}
 \rput(7.5,0.5){\Rnode{H}{$\xi_{t+1}$}}
 \rput(6.5,2){\Rnode{E}{$Y_{t+1} =z_{t+1} K_{t}^\alpha L_{t+1}^{1-\alpha}$}}
 \rput(10,3){\Rnode{F}{$K_{t+1}$}}
 \rput(10,1){\Rnode{G}{$C_{t+1}$}}
 \psset{nodesep=3pt,nrot=:U, arrows=->}
 \ncline{A}{B}
 \naput{$\Omega_t$}
 \ncline{A}{C}
 \nbput{$1-\Omega_t$}
 \ncline{B}{E}
 \ncline{D}{E}
 \ncline{H}{E}
 \ncline{E}{F}
 \naput{$\Omega_{t+1}$}
 \ncline{E}{G}
 \nbput{$1-\Omega_{t+1}$}
 \pcline{|-|}(0.5,0)(4.5,0)
 \lput*{:U}{$t$}
 \pcline{|-|}(5.5,0)(11,0)
 \lput*{:U}{$t+1$}
 \end{pspicture}
 \caption{Aggregate Economy}\label{fig:Walrasian}
 \vspace{0.25in}
  \begin{pspicture}[showgrid=false](11,4)
 \rput(2,2){\Rnode{A}{$\omega_t Y_t$}}
 \rput(4,3){\Rnode{B}{$k_t$}}
 \rput(4,1){\Rnode{C}{$c_t$}}
 \rput(6.25,0.5){\Rnode{D}{$z_{t+1}$}}
 \rput(7.35,0.5){\Rnode{H}{$e_{t+1}$}}
 \rput(6.5,2){\Rnode{E}{$( \alpha s_{t} + (1-\alpha )e_{t+1} )Y_t$}}
 \rput(11.5,3){\Rnode{F}{$k_{t+1}$}}
 \rput(11.5,1){\Rnode{G}{$c_{t+1}$}}
 \psset{nodesep=3pt,nrot=:U, arrows=->}
 \ncline{A}{B}
 \naput{$s_t\Omega_t/\omega_t$}
 \ncline{A}{C}
 \nbput{$1-s_t\Omega_t/\omega_t$}
 \ncline{B}{E}
 \ncline{D}{E}
 \ncline{H}{E}
 \ncline{E}{F}
 \naput{$s_{t+1}\Omega_{t+1}/\omega_{t+1}$}
 \ncline{E}{G}
 \nbput{$1-s_{t+1}\Omega_{t+1}/\omega_{t+1}$}
 \pcline{|-|}(0.5,0)(4.5,0)
 \lput*{:U}{$t$}
 \pcline{|-|}(5.5,0)(11,0)
 \lput*{:U}{$t+1$}
 \end{pspicture}
 \caption{Typical Agent's Problem ($\omega_t=\alpha s_{t}  +(1-\alpha) e_{t+1}$)} \label{fig:agent}
 \end{figure}
 
 Finally, we should note that we currently do not have a proof of existence and uniqueness for the auctioneer's problem. We submit the following conjecture:  
 
 \textbf{Conjecture 1}: \textit{For any number N of agents, number m of time periods, distribution
 of initial endowment and choice of employment shocks as described above, there exists a
 unique solution to the auctioneer's problem.}

 We unable as yet to resolve the conjecture but are working vigorously on it. What we
 present instead is a theorem about the extent to which this model approximately aggregates. Recall that approximate aggregation is a phenomenon first observed numerically in
 a very strong way by Krussel and Smith. What we do here is provide a rigorous justification of a fairly weak form of aggregation. We have as yet no real progress on obtaining
 rigorous existence and uniqueness of solutions to our model, since we can't solve the auctioneer's problem. 
 The reason that we find a rigorous description of approximate aggregation to be tractable
 is that it involves only the agent's problem, which is readily solved and whose solution is
 readily analyzed.
 
\section{Restrictions On The Allowed Shocks}

\subsection{Asymptotic Notation}

As a number of constants will enter our discussion, we will omit the constant by making use of asymptotic notation 
$A = \mathcal{O}(B).$   We introduce the asymptotic notation that if $A > cB$, we say that $A$ is $\mathcal{U}(B)$. (It is an underestimate instead of an overestimate.)

\subsection{Aggregate Shocks}
The aggregate shocks $z_t$ are allowed to be quite general.  In particular, the distribution of the shock in a given period is allowed to depend on the outcomes in prior periods.  However, we do require that the possible values of the aggregate shock be bounded:
\begin{gather}
 z_t = \mathcal{O}(1)
\end{gather}
with the implicit constant independent of the number of agents $N$ and the distribution of wealth.   

\subsection{Employment Shocks}
We place two restrictions on the employment shock, both of which are key in obtaining the aggregation result stated in the following section.

We impose the {\it risk of unemployment} condition, that the probability that $e_{j,t}=0$ is at least a constant $c>0$, also independent of $N$. Thus we are saying that, for each agent $j$ and period $t$,
\begin{gather}
P(e_{j,t}=0)=\mathcal{U}(1) \label{eqn:EmployProbRestriction}
\end{gather}
In other words, in every period, every agent has a chance of being unemployed.  This is important to ensure that agents must counteract a non-insureable risk of unemployment through their investment in each period.  Specifically, under this condition the asymptote in the utility function for zero consumption prevents agents from choosing to consume all that they have in non-terminal periods (under such a decision they face a positive probability of having nothing to consume next period, which results in a contribution of $-\infty$ to their discounted expected utility).  Likewise, this prevents borrowing against future wages.

The {\it similar future prospects} condition requires that the set $\{1,2, \dots, N\}$ is subdivided into a small number $s$ of subsets
$A_1, \dots, A_s$ so that for any $j,k \in A_l$, for some $l$, we have that $e_{j,t}$ and $e_{k,t}$ have the same distribution at each time $t$.  The need for this comes, simply, from the fact that agents facing a very disimilar prospect of unemployment or underemployment cannot be guaranteed to invest a similar amount, even in the case that they have the same current wealth.  Such agents can still be aggregated, provided that there are not so many disimilarities that they overwhelm the estimates.

We conclude this section by providing some basic examples of shocks one might consider.

\subsection{Examples}



{\bf Constant Wages and Uniform Employment} Suppose that in each period $(1-u)N$ of the agents are chosen, for employment, and each is paid an equal fraction $1/((1-u)N)$ of the available wages.  The remaining $uN$ agents go unemployed.  Clearly there is a risk of unemployment (a fraction $uN$ randomly face unemployment), and there are similar future prospects (everyone faces the same chance of employment and unemployment, every period).

Hence we can conclude approximate aggregation in this model via the theorem below. However, the aggregation is not perfect; even for two periods there now appears an error term which is of size $O(1/N)$.  Notice that this is negligible for the very rich, but is worth consideration for the very poor.  

{\bf  Krussell-Smith Shocks.}  Suppose that the technology process follows a Markov chain with two states, $z_g>z_b$ (the `good' state and the `bad' state) with transition probabilities $p_{xy}$, $x,y\in \{g,b\}$.  Further, as above, agents are either employed (1) or unemployed (0), and wages are distributed evenly among employed agents.  However, in this instance, the chances of transitioning between the various states of employment differ, and moreover may depend on the state of the aggregate shock.  Specifically, let $\pi_{ss'ee'}$ denote the joint probability of transition from state $(z_s,e)$ to state $(z_{s'},e')$.  Then the ratio $\pi_{ss'ee'}/\pi_{ss'}$ is the conditional probability of of transitioning into employment status $e'$ from $e$.  

For this stochastic process, there is a risk of unemployment as long as $\pi_{ss'e0}\neq 0$ for any $s,s'\in\{g,b\}$ and $e\in\{0,1\}$. Future prospects are also bounded: agents who are initially employed ($A_1$) may differ in their decisions from those who are initially unemployed ($A_2$) if the chances of transition between different employment states are very small; however everyone falls into one of these groups regardless of the size of $N$.
 
\subsection{Effective Variables and Undepreciated Capital} 

We now return to the recursive formulation (\ref{eqn:RecursiveCRRA}), which we wrote down in the case of total depreciation of capital.  We wish to be able to generalize this to the case where capital does not depreciate fully.  

To this end, we define the effective aggregate $Y_t'$, the total number of goods available:
\begin{gather}
Y_t ' := Y_t + \frac{(1-\delta)}{\alpha}\Omega_{t-1} Y_{t-1} > Y_t.
\end{gather}
We also rewrite the labor shock as an effective shock which is measured relative to this effective aggregate:
\begin{gather}
e_{j,t}' := e_{j,t} \frac{Y_t}{Y_t'} < e_{j,t}. 
\end{gather}
In particular, note that the effective employment shock satisfies both of the conditions of the previous discussion provided the real shock does.  In practice, it can actually become significantly smaller; the factor used to obtain it from the real shock can be rewritten as
\begin{gather}
 \frac{1}{1+\frac{(1-\delta)}{\alpha}\Omega_{t-1} Y_t^{1-\alpha}}.
\end{gather}
In the case of total depreciation this is just 1, and we are left with the original real shock.  However, when $\delta<1$ and $\alpha<1$, a sufficiently large aggregate can make this expression arbitrarily small: an agent is able to consume primarily out of undepreciated capital, and employment becomes a secondary concern.

In terms of the effective variables, we can now follow the previous procedure to write the recursive formulation
\begin{gather}
 V_{T}(\omega;\{e_{j,t}'\}_{t=2}^T,\{z_t'\}_{t=2}^T,\{\Omega_t\}_{t=1}^{T-1}, Y_1) =  \max_{s}\frac{((\omega - s \Omega_1)Y_1)^{1-\sigma}}{1-\sigma} \qquad\qquad\qquad\qquad\notag\\
 \qquad\qquad\qquad\qquad\qquad\qquad + \qquad \beta \E V_{T-1}[(1-\alpha) e_{2}' + \alpha s;\{e_{j,t}'\}_{t=3}^T,\{z_t'\}_{t=3}^T,\{\Omega_t\}_{t=2}^{T-1}, Y_2'].
\end{gather}
Seeing that the only change is the direct replacement of real variables by effective ones, in the discussion that follows we will suppress the $'$ notation, while simply remembering that all aggregate and labor variables refer to their effective counterparts.

\section{Approximate Aggregation}

\subsection{Statements of the Main Theorems}

We now fix the following quantities: the number of time periods $T$, the discount factor $\beta$, the Cobb-Douglas exponent $\alpha$.  Let $N$ denote the number of agents, which we think of as being large compared to $T$.  Let $\{z_{t},e_{j,t}\}$ be the aggregate and employment shocks, and let $\Omega_{t}$, $t=1,...,T-1$ be a sequence of forecasts given by the auctioneer.  Consider the corresponding agent's problem (for the $j$th agent) given by 
\begin{gather}
\max_{s}\frac{((\omega - s \Omega_1)Y_1)^{1-\sigma}}{1-\sigma} \qquad\qquad\qquad\qquad\notag\\
 \qquad\qquad\qquad\qquad\qquad\qquad + \qquad \beta \E V_{T-1}[(1-\alpha) e_2 + \alpha s;\{e_{j,t}\}_{t=3}^T,\{z_t\}_{t=3}^T,\{\Omega_t\}_{t=2}^{T-1}, Y_2],\notag
\end{gather}
where, as noted at the end of the last section, $e_{j,t}$ and $Y_t$ are the effective employment and total number of goods available, respectively.

It is convenient to analyze to present the aggregation result in terms of the {\it savings variable}, $\gamma = s \Omega_1/\omega$, which represents the ratio of the agent's desired period 1 savings $s\Omega_1 Y_1$ to that agent's initial wealth $\omega Y_1$. Thus it is convenient to write the agent's problem as a choice over $\gamma$, 
\begin{gather}
 \max_{\gamma}\frac{((1-\gamma)\omega Y_1)^{1-\sigma}}{1-\sigma} + \beta \E V_{T-1}\left[\frac{\alpha\omega\gamma}{\Omega_1} + (1-\alpha) e_2;\{e_{j,t}\}_{t=3}^T,\{z_t\}_{t=3}^T,\{\Omega_t\}_{t=2}^{T-1}, Y_2\right]. \label{eqn:RecursiveLambda}
\end{gather} 
Notice that a solution of the agent's problem suggests a decision rule for $\gamma$, 
\begin{gather}
\gamma_{T}(\omega;\{e_{j,t}\}_{t=2}^T,\{z_t\}_{t=2}^T,\{\Omega_t\}_{t=1}^{T-1},Y_{1}). \notag
\end{gather}
We refer to this decision rule as the {\it savings function}. 

For a subset $B\subset \{1,...,N\}$, we let 
$$
Y_{l,1} = \sum_{j\in B} \omega_{j,1} Y_1.
$$
be the portion of the aggregate wealth held by the agents who are indexed by $B$.  In this notation, our main theorem can be written as follows.

\begin{thm}\label{thm:ApproximateAggregation} Suppose that the sequence of production and employment shocks $\{z_{t},e_{j,t}\}$ satisfy the risk of unemployment, and similar future prospects conditions.  Let $\epsilon>0$.  Then there is a natural number $M=\mathcal{O}(1/\epsilon)$ such that we can partition $\{1,...,N\}$ into subsets $B_1,...,B_M$ with corresponding ratios $\gamma_{1},...,\gamma_{M}\in (0,1)$ such that
$$
|\sum_{j\in\{1,...,N\}} \gamma_{j,1} \omega_{j,1}Y_1 - \sum_{m\in\{1,...,M\}} \gamma_{m} Y_{m,1}| \leq \epsilon Y_1.
$$
\end{thm}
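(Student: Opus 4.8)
The plan is to prove the theorem by showing that the savings function $\gamma_T(\omega;\cdots)$, viewed as a function of the initial wealth share $\omega$ with the shock data fixed within one of the similar-future-prospects classes $A_\ell$, is \emph{bounded} (taking values in a compact subinterval of $(0,1)$) and \emph{Lipschitz}, or at least of bounded variation, in $\omega$. Once we have such control, we partition the range $(0,1)$ of possible $\gamma$-values into $\O(1/\epsilon)$ intervals of length $\O(\epsilon)$, and we define the sets $B_m$ by intersecting the preimages of these intervals with the classes $A_\ell$; the total number of sets is $\O(s/\epsilon)=\O(1/\epsilon)$ since $s$ and $T$ are fixed. On each $B_m$ we take $\gamma_m$ to be any value in the corresponding interval (say its midpoint), so that $|\gamma_{j,1}-\gamma_m|\le\O(\epsilon)$ for every $j\in B_m$. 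Then
\[
\Bigl|\sum_{j}\gamma_{j,1}\omega_{j,1}Y_1-\sum_m\gamma_m Y_{m,1}\Bigr|
\le\sum_m\sum_{j\in B_m}|\gamma_{j,1}-\gamma_m|\,\omega_{j,1}Y_1
\le\O(\epsilon)\sum_j\omega_{j,1}Y_1=\O(\epsilon)Y_1,
\]
using that $\sum_j\omega_{j,1}=1$ (all initial output is either consumed-as-labor-share or held as capital, and the shares sum to one). Rescaling $\epsilon$ by the implicit constant gives the stated bound.

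The substantive work, then, is to establish the two analytic facts about $\gamma_T$. For boundedness away from $1$: the risk-of-unemployment condition forces $\gamma_T<1-c'$ for a constant $c'>0$ independent of $N$, because choosing $\gamma$ too close to $1$ means that with probability $\ge c$ next period's employment share is $0$, and then the agent enters period $2$ with wealth share $\alpha\omega\gamma/\Omega_1$ which must still be positive but the continuation value $V_{T-1}$ inherits the same $-\infty$ penalty recursively; a marginal argument on the first-order condition at $\gamma\to1^-$ shows the marginal utility of current consumption blows up faster than the marginal continuation value, so an interior optimum bounded away from $1$ exists. Boundedness away from $0$ is easier: if the agent saves nothing, the marginal value of an extra unit of savings is finite and positive (again by risk of unemployment, the continuation value is strictly increasing in entering wealth), while the marginal cost in period-$1$ utility is finite, so $\gamma_T>c''>0$. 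For the Lipschitz/monotonicity control in $\omega$: I would argue inductively on $T$ that $V_{T-1}$ is $C^1$, strictly concave, and has derivative with controlled logarithmic behavior, so that the period-$1$ objective in \eqref{eqn:RecursiveLambda} is strictly concave in $\gamma$ with a unique maximizer depending continuously — and with a uniform modulus of continuity — on $\omega$, since $\omega$ enters only through smooth terms. The key point is that the bounds must be \emph{uniform in $N$}, which they are because $T$, $\beta$, $\alpha$, the constant $c$ in the unemployment condition, and the bound on $z_t$ are all fixed.

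The main obstacle I expect is making the continuity estimate in $\omega$ genuinely uniform, i.e. controlling the modulus of continuity of $\gamma_T$ by a constant that does not degrade as agents become very poor ($\omega\to0$) or very rich. Near $\omega=0$ the CRRA utility is most badly behaved, and the savings function could in principle vary rapidly; one likely needs a change of variables (the paper's use of the ratio variable $\gamma=s\Omega_1/\omega$ rather than $s$ itself is presumably exactly the right normalization here, since homotheticity of CRRA should make $\gamma$ insensitive to $\omega$ in the limit) together with an explicit lower bound on the strict concavity (a uniform bound on the second derivative of the objective from below) to convert the first-order condition into a Lipschitz estimate via the implicit function theorem. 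A secondary technical point is that $V_{T-1}$ depends on the \emph{future} shocks and forecasts $\{\Omega_t\}_{t\ge2}$, so the inductive hypothesis must carry all the uniform bounds through the expectation $\E$; the similar-future-prospects condition is what guarantees that agents in the same class $A_\ell$ face literally the same distribution over those future shocks, so their savings functions coincide as functions of $\omega$ and the partition into $\O(1/\epsilon)$ pieces is legitimate.
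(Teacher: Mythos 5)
Your skeleton (bin the agents, approximate $\gamma_{j,1}$ by a bin-level constant, apply the triangle inequality against $\sum_j\omega_{j,1}=1$) is the paper's skeleton, but the object you partition is different, and this changes what analytic input is actually required. You partition the \emph{range} of $\gamma$ into $\mathcal{O}(1/\epsilon)$ intervals of length $\epsilon$ and take preimages within each class $A_\ell$; for that, the only fact you need is $\gamma_{j,1}\in(0,1)$, and the Lipschitz/bounded-variation control you announce as the ``substantive work'' is never used by your own binning step -- the literal statement of the theorem follows almost vacuously. The paper instead partitions the \emph{domain} $[0,1]$ of wealth shares into intervals $I_{m,l}$ on which the oscillation of $\gamma_T(\cdot)$ is at most $\epsilon$, so that the bins are wealth intervals; this is what supports the intended reading (stated in the discussion following the theorem) that wealth can be redistributed within a bin without moving the aggregate, and it is why the paper needs \ref{thm:GammaDerivative} and the subsequent total-variation estimate $\int_0^1\gamma_T'(\omega)\,d\omega=\mathcal{O}(1)$. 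Your preimages are wealth intervals only if you additionally prove that $\gamma_T$ is monotone in $\omega$, which is exactly the other half of \ref{thm:GammaDerivative}; with monotonicity plus boundedness in hand, the total variation bound is automatic and the two binnings coincide. Your boundedness claims ($\gamma_T$ in a compact subinterval of $(0,1)$ via the risk-of-unemployment asymptote) match the paper's Lemmas on $\overline{\gamma}_T$ and $\underline{\gamma}_T$.

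Where your program would run into trouble is precisely the obstacle you identify: a \emph{pointwise, uniform-in-$\omega$} Lipschitz bound on $\gamma_T$ near $\omega=0$ via the implicit function theorem is not what the paper proves and is likely false or at least much harder -- the paper's bound on $\gamma_T'$ contains the factor $\omega^{\sigma-1}$ and an expectation of $e_2/y_2^{1+\sigma}$, which it controls only \emph{after integrating in $\omega$}, by splitting the integral at $\omega=e_2$ and using $y_2=\mathcal{U}(e_2)$ on $[0,e_2]$ and $y_2=\mathcal{U}(\omega)$ on $[e_2,1]$, together with an inductive bound on $\omega\gamma_{T-1}'(\omega)$. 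So if you intend the stronger, wealth-interval version of the binning, replace ``Lipschitz'' by ``monotone, or of total variation $\mathcal{O}(1)$,'' and obtain it from the sign and the integrated size of the implicitly differentiated first-order condition rather than from a uniform modulus of continuity.
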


To understand the relation of this theorem to aggregation, it is instructive to look at what occurs if we were able to take $M=1$ (this is not possible in practice outside of the trivial case when $\epsilon>1$).   In that case, the theorem says that if we lump all agents together into a single entity $B_1$ holding wealth $\sum_{j\in B_1} \omega_{j,1}Y_1=Y_1$ and saving a fraction $\gamma_1$ of its wealth, the error between the aggregate investment thus computed and the true aggregate initial period investment $\sum_{j} \gamma_{j,1} \omega_{j,1}Y_1$ will be at least as small as $2\epsilon Y_1$.  In other words, all agents combined would save $\gamma Y_1$ where $\gamma$ may change as we alter the distribution of wealth among agents, but nevertheless will remain in the interval $[\gamma_1-2\epsilon,\gamma_1+2\epsilon]$.

For meaningful values of $\epsilon$, the above theorem is slightly weaker: it says that we can redistribute wealth among agents within any single one of $\mathcal{O}(1/\epsilon)$ bins without changing the aggregate substantially.  In other words, we have {\it approximate aggregation}.  

A key feature of the above theorem is that the number of bins is independent of both the number of agents and the distribution of wealth among those agents.  The intuition which motivates our isolating of $N$ and the distribution as the dependencies of interest is the potential for using the limit $N\to\infty$ to approximate continuous distribution models such that outlined in KS.  As such, the above aggregation result is constructed to be robust no matter how large we might take $N$ to be. 

The main theorem will follow, by a pigeonholing argument, as a corollary of the following theorem investigating the behavior of the derivative of the savings function with respect to agent wealth.

\begin{thm}\label{thm:GammaDerivative} Suppose that the sequence of production and employment shocks $\{z_t,e_{j,t}\}$ satisfy the risk of unemployment and similar future prospects conditions.  Let $\gamma_T'$ denote the partial derivative of the function $\gamma_T$ with respect to its first variable (the share variable, $\omega$) and similarly for $\gamma_{T-1}'$.

Then $\gamma_T$ is increasing and we have a bound on $\gamma_T'(\omega;\{e_{j,t}\}_{t=2}^T,\{z_t\}_{t=2}^T,\{\Omega_t\}_{t=1}^{T-1},Y_{1})$ given by
\begin{gather}
\mathcal{O}\left(\frac{1}{\omega^{1-\sigma}}\E\left[\frac{e_{j.2}}{y_{j,2}^2}+y_{j,2}^{1-\sigma}\gamma_{T-1}'(y_{j,2};\{e_{j,t}\}_{t=3}^T,\{z_t\}_{t=3}^T,\{\Omega_t\}_{t=2}^{T-1},Y_{2})\right]\right).\notag
\end{gather}
where the random variable $y_{j,2}$ is given by $\alpha\omega\gamma_T/\Omega_1+(1-\alpha)e_2$.
\end{thm}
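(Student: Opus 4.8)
The plan is to differentiate the first-order condition for the agent's problem \eqref{eqn:RecursiveLambda} implicitly with respect to the wealth share $\omega$ and then bound the resulting expression term by term. First I would write the first-order condition in $\gamma$: differentiating the objective in \eqref{eqn:RecursiveLambda} gives
\begin{gather}
-\omega Y_1 \big((1-\gamma)\omega Y_1\big)^{-\sigma} + \frac{\beta\alpha\omega}{\Omega_1}\,\E\!\left[V_{T-1}'\!\left(\tfrac{\alpha\omega\gamma}{\Omega_1}+(1-\alpha)e_2;\dots\right)\right] = 0,\notag
\end{gather}
where $V_{T-1}'$ is the derivative of the continuation value in its first argument; since the risk-of-unemployment condition forces an interior solution $\gamma_T\in(0,1)$ (consuming everything yields $-\infty$ expected utility next period, and the borrowing constraint is kept slack by the same asymptote), this holds with equality and we may apply the implicit function theorem. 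The monotonicity claim ($\gamma_T$ increasing) I would get first, as a warm-up and because it is needed in the bound: differentiating the FOC, the coefficient of $\gamma_T'$ collects the second derivative of the period-1 utility term (which is negative, being $-\sigma(1-\gamma)^{-\sigma-1}$ up to positive factors) and $\E[V_{T-1}'']$ times $(\alpha\omega/\Omega_1)^2$; concavity of $V_{T-1}$ (inherited by induction from concavity of $u$ and the fact that the max of concave functions over an affine constraint is concave) makes this coefficient strictly negative, while the $\partial/\partial\omega$ of the FOC holding $\gamma$ fixed is positive, so $\gamma_T'>0$.

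Next, the quantitative bound. Solving the differentiated FOC for $\gamma_T'$ produces a ratio whose numerator is the $\omega$-derivative of the FOC at fixed $\gamma$ and whose denominator is the (negative of the) $\gamma$-derivative. The key structural point is that the denominator is bounded \emph{below} away from zero by the period-1 curvature term alone: because $\gamma_T$ stays in a compact subinterval of $(0,1)$ whose endpoints depend only on the fixed data $T,\beta,\alpha$ and the risk-of-unemployment constant $c$ (not on $N$ or the wealth distribution), the quantity $(1-\gamma_T)^{-\sigma-1}$ is $\O(1)$ and bounded below, and since this term carries a factor $\omega^{2}Y_1^{2}\cdot(\omega Y_1)^{-\sigma}$-type weight, dividing by it is what converts the raw numerator into the claimed $\omega^{-(1-\sigma)}$ prefactor. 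In the numerator, the period-1 piece contributes a term of order $\omega^{-1}$ (after the division, absorbed into the stated bound), and the continuation piece contributes $\E[\,\partial_\omega V_{T-1}'(y_{j,2};\dots)\,]$, which by the chain rule in the first argument $y_{j,2}=\alpha\omega\gamma_T/\Omega_1+(1-\alpha)e_2$ splits into $V_{T-1}''(y_{j,2})$ times $\partial_\omega y_{j,2}$. Here I would re-express $V_{T-1}'$ and $V_{T-1}''$ in terms of $\gamma_{T-1}$ and $\gamma_{T-1}'$ using the envelope theorem applied to \eqref{eqn:RecursiveLambda} at period $T-1$: the envelope theorem gives $V_{T-1}'(y) \propto y^{-\sigma}(1-\gamma_{T-1})^{-\sigma}\cdot(\text{stuff bounded above and below})$, and differentiating once more brings down exactly a $y^{-\sigma-1}$ term plus a $y^{-\sigma}\gamma_{T-1}'$ term. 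After collecting the powers of $y_{j,2}$ — noting $(1-\alpha)e_{j,2}\le y_{j,2}$ so $e_{j,2}/y_{j,2}^{\,2}$ dominates the pure-curvature remainder when $e_{j,2}>0$, while the $y_{j,2}^{1-\sigma}\gamma_{T-1}'$ term captures the recursive contribution — one lands on the stated expectation $\E[\,e_{j,2}/y_{j,2}^{2} + y_{j,2}^{1-\sigma}\gamma_{T-1}'(y_{j,2};\dots)\,]$.

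The main obstacle I anticipate is controlling the denominator uniformly — that is, proving that $\gamma_T$ is bounded away from both $0$ and $1$ by constants independent of $N$ and of the wealth distribution, since the whole point of the theorem is $N$-uniformity. The upper bound $\gamma_T<1$ is the delicate one: it requires quantifying how the positive-probability unemployment event ($\P(e_{j,2}=0)\ge c$) forces a strictly positive, $N$-independent floor on savings, which means carefully lower-bounding the marginal continuation value on the unemployment branch — there $y_{j,2}=\alpha\omega\gamma_T/\Omega_1$, so $V_{T-1}'$ blows up like $(\alpha\omega\gamma_T/\Omega_1)^{-\sigma}$ as $\gamma_T\downarrow 0$, and balancing this against the period-1 marginal utility pins $\gamma_T$ away from $0$. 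A secondary technical point is making the induction hypothesis precise: I would carry simultaneously the claims that $V_{T}$ is $C^2$ and concave, that $\gamma_T\in[\gamma_{\min},\gamma_{\max}]\subset(0,1)$ with endpoints depending only on $(T,\beta,\alpha,c)$, and the derivative bound itself, so that at each step the envelope/chain-rule manipulations and the denominator lower bound are all available from the previous period. Once that scaffolding is in place, the term-by-term estimate is routine bookkeeping of exponents.
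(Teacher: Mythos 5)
Your proposal follows essentially the same route as the paper: implicitly differentiate the first-order condition \eqref{eqn:FirstOrderCondition2} in $\omega$, solve for $\gamma_T'$ as a ratio, lower-bound the denominator via concavity of $V_{T-1}$ by keeping only the period-1 curvature term, and rewrite the numerator using the envelope formulas of Lemma~\ref{lemma:ValueFunctionDerivatives} so that the recursive $\gamma_{T-1}'$ term and the $e_2/y_2$ term emerge, with the uniform $N$-independent bounds $\gamma_T,\,1-\gamma_T=\mathcal{U}(1)$ (the paper's Lemmas~\ref{lemma:GammaUB} and~\ref{lemma:GammaLB}, proved exactly by the unemployment-branch comparison you sketch) supplying the constants. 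The only minor discrepancy is bookkeeping: the $\omega^{-(1-\sigma)}$ prefactor and the $\mathcal{O}(1)$ absorption of $(1-\gamma_{T-1}(y_2))^{-(1+\sigma)}$ live in the numerator rather than coming from the denominator division, but this does not affect the argument.
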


\subsection{Lemmas Regarding Value and Savings Functions}

The next two sections will contain the large majority of our technical arguments.  For this portion of the discussion we will simplify the notation in order to focus on the most relevant dependencies of our value functions and decision rules.  

Specifically, we will write
\begin{gather}
 V_T(\omega; Y_1)\ {\rm for\ } V_{T}(\omega;\{e_{j,t}\}_{t=2}^T,\{z_t\}_{t=2}^T,\{\Omega_t\}_{t=1}^{T-1},Y_{1})\notag\\
 s_T(\omega; Y_1)\ {\rm for\ } f_{T}(\omega;\{e_{j,t}\}_{t=2}^T,\{z_t\}_{t=2}^T,\{\Omega_t\}_{t=1}^{T-1},Y_{1})\notag\\
 \gamma_T(\omega; Y_1)\ {\rm for\ } \gamma_{T}(\omega;\{e_{j,t}\}_{t=2}^T,\{z_t\}_{t=2}^T,\{\Omega_t\}_{t=1}^{T-1},Y_{1})\notag
\end{gather}
All derivatives will be with respect to the first variable, $\omega$, and will be denoted (respectively) by $V_T'(\omega; Y_1)$, $s_T'(\omega; Y_1)$, and $\gamma_T'(\omega; Y_1)$, with analogous expressions for higher order derivatives. We assume throughout that the shocks $\{z_t,e_{j,t}\}$ satisfy the conditions laid out previously.  

We further simplify the notation by speaking about the problem faced by a \emph{typical agent}, and correspondingly suppressing the subscripts $1\leq j\leq N$ on employment shocks.

In the following arguments, we will frequently need to refer to the first order condition for the agent's problem.  For convenience we record it here, once and for all:
\begin{gather}
 \frac{Y_1^{1-\sigma}}{(\omega-s\Omega_1)^{\sigma}} = \frac{\beta\alpha}{\Omega_1} \E V_{T-1}'((\alpha s +(1-\alpha) e_{2});Y_{2})\label{eqn:FirstOrderCondition1}
\end{gather}
and, in terms of the savings variable $\gamma$,
\begin{gather}
 \frac{Y_1^{1-\sigma}\omega^{1-\sigma}}{(1-\gamma)^\sigma} = \frac{\beta\alpha\omega}{\Omega_1}\E V_{T-1}'\left(\frac{\alpha\omega\gamma}{\Omega_1} + (1-\alpha) e_2;Y_{2}\right).\label{eqn:FirstOrderCondition2}
\end{gather}

We begin by demonstrating that the agent's bounded rationality makes it easy to solve the agent's problem.

\begin{lemma} 
There is a unique solution to the agent's problem.  The decision rule is increasing with respect to $\omega$, and the corresponding value function is strictly increasing and strictly concave with respect to $\omega$.  Moreover
\begin{gather}
 \lim_{\omega\to 0^+} V_T(\omega;Y_1) = -\infty
\end{gather}

 \end{lemma}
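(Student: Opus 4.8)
The plan is to induct on the number of periods $T$, working throughout with the \emph{level} of savings $s$ rather than the ratio $\gamma$, since the period-$1$ consumption $c_1=(\omega-s\Omega_1)Y_1$ is affine in $(s,\omega)$ whereas $(1-\gamma)\omega Y_1$ is not. For the base case $T=1$ everything is consumed, $f_1\equiv0$ and $V_1(\omega;Y_1)=((\omega Y_1)^{1-\sigma}-1)/(1-\sigma)$, so all assertions are immediate: $V_1$ is smooth with $V_1'=Y_1^{1-\sigma}\omega^{-\sigma}>0$ and $V_1''=-\sigma Y_1^{1-\sigma}\omega^{-1-\sigma}<0$, the constant rule $f_1$ is trivially increasing, and $\lim_{\omega\to0^+}V_1=\lim_{c\to0^+}u(c)=-\infty$. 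For the inductive step I assume the lemma for $T-1$ — applied to the shifted shock configuration $\{e_{j,t}\}_{t=3}^T,\{z_t\}_{t=3}^T$, which still satisfies the risk-of-unemployment and similar-future-prospects conditions, and for every value of the bounded argument $Y_2$ — and I carry along in the induction the auxiliary claim that $V_T\in C^\infty((0,\infty))$ and $f_T\in C^\infty$, since the envelope and implicit-function arguments below (and in the sequel) require it. Fix the shocks and $\Omega_1>0$ and set $\tilde J(s,\omega):=u((\omega-s\Omega_1)Y_1)+\beta\,\E V_{T-1}(\alpha s+(1-\alpha)e_2;Y_2)$ on the convex region $D=\{\,\omega>0,\ 0\le s<\omega/\Omega_1\,\}$ (nonnegativity of $s$ is the borrowing constraint; $s<\omega/\Omega_1$ keeps $c_1=(\omega-s\Omega_1)Y_1>0$).

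\emph{Existence and uniqueness of the maximizer.} For fixed $\omega$, the map $s\mapsto\tilde J(s,\omega)$ is continuous (continuity of $V_{T-1}$), strictly concave (the consumption term is the strictly concave $u$ composed with an affine function of $s$; the continuation term is an expectation of the concave $V_{T-1}$ of an affine function of $s$), bounded above, and $\to-\infty$ as $s\to(\omega/\Omega_1)^-$ (the consumption term $\to u(0^+)=-\infty$ while the continuation term stays bounded above, its argument being bounded and $V_{T-1}$ increasing). Moreover its right derivative at $s=0$ is $+\infty$: on the event $\{e_2=0\}$, which by the risk-of-unemployment condition has probability at least a constant $c>0$, the difference quotients of the continuation term increase (by concavity, via monotone convergence) to $\alpha V_{T-1}'(0^+;Y_2)=+\infty$, since a concave function tending to $-\infty$ at the left endpoint of its domain has derivative blowing up there. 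Hence the maximizer is strictly interior, exists, and is unique by strict concavity; call it $f_T(\omega;Y_1)\in(0,\omega/\Omega_1)$, equivalently $\gamma_T\in(0,1)$, characterized by the first-order condition \eqref{eqn:FirstOrderCondition1}. This is precisely the unique solution of the agent's problem.

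\emph{Concavity, monotonicity, the limit, and regularity.} The crucial point is that $\tilde J$ is \emph{strictly jointly} concave on $D$: along any segment between two distinct feasible points, if the $s$-coordinates differ the continuation term is strictly concave (strict concavity of $V_{T-1}$, since $\alpha s+(1-\alpha)e_2$ then takes distinct values at the endpoints for every realization of $e_2$) while the consumption term is concave, and if the $s$-coordinates agree but the $\omega$-coordinates differ the consumption term is strictly concave while the continuation term is constant. Hence $V_T(\omega;Y_1)=\max_s\tilde J(s,\omega)$ is strictly concave in $\omega$: for $\omega_1\ne\omega_2$ and $s_i^*=f_T(\omega_i;Y_1)$, $V_T(\lambda\omega_1+(1-\lambda)\omega_2)\ge\tilde J(\lambda s_1^*+(1-\lambda)s_2^*,\lambda\omega_1+(1-\lambda)\omega_2)>\lambda V_T(\omega_1)+(1-\lambda)V_T(\omega_2)$, the middle inequality strict by strict joint concavity at the distinct points $(s_i^*,\omega_i)$. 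Strict monotonicity is revealed preference: for $\omega_2>\omega_1$, evaluating the period-$T$ problem (cf.\ \eqref{eqn:RecursiveLambda}) at $\omega_2$ with the poorer agent's optimal ratio $\gamma_T(\omega_1;Y_1)\in(0,1)$ yields a value strictly above $V_T(\omega_1;Y_1)$, because $u$ and $V_{T-1}$ are strictly increasing. For the decision rule, $\partial^2_{s\omega}\tilde J=-\Omega_1 Y_1^2\,u''((\omega-s\Omega_1)Y_1)>0$ (the continuation term does not depend on $\omega$), so $\tilde J$ is supermodular and $f_T(\cdot;Y_1)$ is non-decreasing by monotone comparative statics — strictly increasing, since equal interior maximizers at $\omega_1<\omega_2$ would both solve the first-order condition, impossible when $\partial^2_{s\omega}\tilde J>0$. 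The limit follows from the crude bound $V_T(\omega;Y_1)<u(\omega Y_1)+\beta\,\E V_{T-1}(\alpha\omega/\Omega_1+1;Y_2)$ (using $c_1<\omega Y_1$, $\alpha f_T+(1-\alpha)e_2<\alpha\omega/\Omega_1+1$, and $u,V_{T-1}$ increasing): as $\omega\to0^+$ the first term $\to u(0^+)=-\infty$ while the second stays bounded (its argument and $Y_2$ range over bounded sets, $V_{T-1}$ continuous). Finally, the strict second-order condition lets the implicit function theorem upgrade the first-order condition to $f_T(\cdot;Y_1)\in C^\infty$, and the envelope theorem then gives $V_T'(\omega;Y_1)=Y_1\,u'((\omega-f_T\Omega_1)Y_1)\in C^\infty$ (re-confirming $V_T'>0$), closing the induction; the interchanges of $\E$ with $d/ds$ are justified by bounded convergence (bounded ranges of the shocks and of $Y_2$, finite $T$), with monotone convergence covering the possibly infinite derivative at $s=0$.

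The step I expect to be the main obstacle is the strict concavity of $V_T$: one must realize that the Bellman objective should be parametrized by the savings level $s$ — so that the consumption term is affine and $\tilde J$ is jointly concave in $(s,\omega)$ — and then verify \emph{strict} (not merely weak) joint concavity via the two-case directional argument above. The other assertions are then essentially bookkeeping, the one subtlety being that the $C^\infty$ regularity must be threaded through the induction in order to legitimize the envelope and implicit-function manipulations used here and later.
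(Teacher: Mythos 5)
Your proposal is correct and reaches every assertion of the lemma, but it gets there by a noticeably different route than the paper. The induction skeleton and the two inductive inputs are the same in both arguments: the $-\infty$ asymptote of $V_{T-1}$ at zero wealth (activated through the risk-of-unemployment condition, which forces the continuation term to blow up as $s\to 0^+$ and hence an interior saver), and the strict concavity of $V_{T-1}$. Where the paper establishes existence and uniqueness by an intermediate-value argument on the two sides of the first-order condition \eqref{eqn:FirstOrderCondition1}, you argue directly on the objective (strict concavity in $s$ plus boundary blow-up at both ends of $(0,\omega/\Omega_1)$) --- essentially equivalent. The real divergence is in the concavity of $V_T$: the paper differentiates the value function twice, uses the first-order condition to cancel the terms involving $s_T''$, and observes that the two surviving terms are negative; you instead prove strict \emph{joint} concavity of the objective in $(s,\omega)$ and invoke preservation of concavity under maximization. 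Your route is derivative-free at that step and so avoids the paper's unstated assumption that $s_T$ is twice differentiable (a gap you explicitly repair by threading $C^\infty$ regularity of $f_T$ and $V_T$ through the induction via the implicit function and envelope theorems); the paper's computation, on the other hand, is not wasted, since it is what produces the explicit formulas for $V_T'$ and $V_T''$ recorded in Lemma \ref{lemma:ValueFunctionDerivatives} and used repeatedly afterwards --- your envelope identity $V_T'(\omega;Y_1)=Y_1 u'((\omega-f_T\Omega_1)Y_1)$ recovers the first of these. Your supermodularity/Topkis argument for monotonicity of the decision rule and your crude upper bound for the $\omega\to 0^+$ limit are clean formalizations of what the paper dispatches with ``inspection'' and ``inherited from $V_{T-1}$,'' respectively.
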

\begin{proof}
 When $T=1$, this is clear: the decision rule is to consume everything. The value function is $u(\omega; Y_1)$, which is clearly increasing and strictly concave as a function of $\omega$, with the given asymptote.

Let $T>1$, and suppose we have a unique solution to the $T-1$ period model with a value function having the stated properties.  We need to show that, for a given value of $\omega$, there is a unique value of $s\in (0,\omega/\Omega_1)$ satisfying (\ref{eqn:FirstOrderCondition1}).  The left hand side of this equation is strictly increasing in $s$ from $\frac{Y_1^{1-\sigma}}{\omega^{\sigma}}>0$ to $\infty$.  As $s\to 0^+$, the right side approaches $\infty$, due to positive probability that $e_2=0$ (possibility of unemployment); moreover it is strictly decreasing by concavity of $V_{T-1}$.  From these facts it is clear that such a value of $s$ must exist, and moreover must be unique.

That $s_T$ is increasing in $\omega$ can be seen from an inspection of (\ref{eqn:FirstOrderCondition1}).  The fact that $V_{T-1}$ is increasing will be apparent from \ref{lemma:ValueFunctionDerivatives} below, and we see that the asymptote is inherited from $V_{T-1}$ due to the possibility of unemployment.  We must therefore only show that the $T$ period value function is concave.  

In our simplified notation, we have
\begin{gather}
 V_T(\omega; Y_1) = \frac{((\omega - s_{T}(\omega;Y_1) \Omega_1)Y_1)^{1-\sigma}}{1-\sigma} \qquad\qquad\qquad\qquad\notag\\
 \qquad\qquad\qquad\qquad\qquad\qquad + \qquad \beta \E V_{T-1}[((1-\alpha) e_2 + \alpha s_{T}(\omega;Y_1)); Y_2].\notag
\end{gather}
Differentiating with respect to $\omega$ we have
\begin{gather}
V_T'(\omega;Y_1) = \frac{Y_{1}^{1-\sigma}(1- s_{T}'(\omega;Y_1)\Omega_1)}{(\omega - s_{T}(\omega;Y_1) \Omega_1)^\sigma} \qquad\qquad\qquad\qquad\notag\\
 + \qquad \beta \E  \alpha s_{T}'(\omega;Y_1)V_{T-1}'[((1-\alpha) e_2 + \alpha s_{T}(\omega;Y_1)); Y_2],\notag
\end{gather}
and doing so again we have
\begin{gather}
 V_T''(\omega;Y_1) = \frac{-\sigma Y_{1}^{1-\sigma}(1- s_{T}'(\omega;Y_1)\Omega_1)^2}{(\omega - s_{T}(\omega;Y_1) \Omega_1)^{1+\sigma}} + \frac{Y_{1}^{1-\sigma}(- s_{T}''(\omega;Y_1)\Omega_1)}{(\omega - s_{T}(\omega;Y_1) \Omega_1)^\sigma} 
\qquad\qquad\qquad\qquad\notag\\ 
+ \qquad  \beta \E  (\alpha s_{T}'(\omega;Y_1))^2 V_{T-1}''[((1-\alpha) e_2 + \alpha s_{T}(\omega;Y_1)); Y_2] 
\qquad\qquad\qquad\qquad\notag\\ 
+ \qquad \beta \E  \alpha s_{T}''(\omega;Y_1)V_{T-1}'[((1-\alpha) e_2 + \alpha s_{T}(\omega;Y_1)); Y_2].\notag
\end{gather}
Invoking the first order condition, we see that the terms involving $s_{T}''$ cancel, leaving
\begin{gather}
 V_T''(\omega;Y_1) = \frac{-\sigma Y_{1}^{1-\sigma}(1- s_{T}'(\omega;Y_1)\Omega_1)^2}{(\omega - s_{T}(\omega;Y_1) \Omega_1)^{1+\sigma}}
\qquad\qquad\qquad\qquad\notag\\ 
+ \qquad  \beta \E  (\alpha s_{T}'(\omega;Y_1))^2 V_{T-1}''[((1-\alpha) e_2 + \alpha s_{T}(\omega;Y_1)); Y_2]. \notag
\end{gather}
Both remaining terms are negative: this is clear by investigation for the first term, and follows from concavity of the one-period-less value function in the second.  This closes the induction.
\end{proof}

When analyzing the derivative of $\gamma_T(\omega;Y_1)$, we will be forced to confront derivatives of the value function $V_{T-1}(\omega;Y_1)$ emergent in the first order conditions.  We will need the formulae

\begin{lemma}\label{lemma:ValueFunctionDerivatives}
 \begin{gather}
 V_{T}'(\omega;Y_1) = \frac{Y_1^{1-\sigma}}{(\omega-s_{T}(\omega;Y_1)\Omega_1)^\sigma}\notag\\
 V_{T}''(\omega;Y_1) = \frac{\sigma Y_1^{1-\sigma}(s_{T}'(\omega;Y_1)\Omega_1 - 1)}{(\omega-s_T(\omega;Y_1)\Omega_1)^{1+\sigma}}.\notag
 \end{gather}
\end{lemma}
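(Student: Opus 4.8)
The plan is to read both formulas off the computation already carried out in the proof of the preceding lemma; the only new ingredient is a single application of the first order condition, i.e.\ an envelope-theorem cancellation.

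First I would recall that, differentiating the definition of $V_T(\omega;Y_1)$ with respect to $\omega$ and applying the chain rule, one obtains
$$
V_T'(\omega;Y_1) = \frac{Y_{1}^{1-\sigma}(1- s_{T}'(\omega;Y_1)\Omega_1)}{(\omega - s_{T}(\omega;Y_1) \Omega_1)^\sigma} + \beta \alpha\, s_{T}'(\omega;Y_1)\,\E V_{T-1}'[((1-\alpha) e_2 + \alpha s_{T}(\omega;Y_1)); Y_2],
$$
exactly as in the proof of the previous lemma. At this point I would note explicitly that $s_T$ is differentiable in $\omega$: this follows from the implicit function theorem applied to \eqref{eqn:FirstOrderCondition1}, whose left side is smooth and strictly increasing in $s$ and whose right side is smooth in $s$ (by the inductive smoothness and strict concavity of $V_{T-1}$), so the unique root $s_T(\omega;Y_1)$ depends smoothly on $\omega$; hence $V_T'$ above is well defined.

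Next I would substitute \eqref{eqn:FirstOrderCondition1} into the second term, replacing $\beta\alpha\,\E V_{T-1}'[\cdots]$ by $\Omega_1 Y_1^{1-\sigma}/(\omega - s_T(\omega;Y_1)\Omega_1)^\sigma$. The two terms then combine via $(1 - s_T'\Omega_1) + s_T'\Omega_1 = 1$, leaving
$$
V_T'(\omega;Y_1) = \frac{Y_1^{1-\sigma}}{(\omega-s_{T}(\omega;Y_1)\Omega_1)^\sigma},
$$
which is the first claimed identity. Finally, differentiating this expression once more in $\omega$ by the chain rule gives
$$
V_T''(\omega;Y_1) = -\sigma\, Y_1^{1-\sigma}\,\frac{1 - s_T'(\omega;Y_1)\Omega_1}{(\omega - s_T(\omega;Y_1)\Omega_1)^{1+\sigma}} = \frac{\sigma Y_1^{1-\sigma}(s_{T}'(\omega;Y_1)\Omega_1 - 1)}{(\omega-s_T(\omega;Y_1)\Omega_1)^{1+\sigma}},
$$
which is the second claimed identity.

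There is essentially no obstacle beyond bookkeeping: the substance is the envelope-theorem cancellation followed by one differentiation. If anything merits care, it is the justification of differentiability of $s_T$ (hence of the formula for $V_T'$) via the implicit function theorem, which is why I would spell that step out rather than leave it implicit.
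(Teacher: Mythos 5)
Your proof is correct and follows essentially the same route as the paper: differentiate the value function, use the first order condition \eqref{eqn:FirstOrderCondition1} to cancel the terms involving $s_T'$, and then differentiate the resulting closed form once more. Your explicit appeal to the implicit function theorem to justify differentiability of $s_T$ is a detail the paper leaves implicit, but it does not change the argument.
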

We remark that in the $\log$ utility case, the derivatives of the value function become the limits of the above expressions as $\sigma\to 1$.  This can be seen in an almost identical (albeit simpler) calculation to the below. 

\begin{proof}
The value function is what we obtain by substituting the optimal decision rule into the agent's discounted expected consumption.  Differentiating the resulting function with respect to $\omega$, we get
\begin{gather}
 V_T'(\omega; Y_1) = \frac{Y_1^{1-\sigma}(1-s_T'(\omega;Y_1)\Omega_1)}{(\omega-s_T(\omega;Y_1)\Omega_1)^\sigma} + \beta\alpha s_T'(\omega;Y_1)\E  Y_2 V_{T-1}'(\alpha s_T(\omega;Y_1) + (1-\alpha)e;Y_2).\notag
\end{gather}
Using the first order condition (\ref{eqn:FirstOrderCondition1}), the terms involving $s_T'$ cancel, and we get
\begin{gather}
 V_T'(\omega; Y_1) = \frac{Y_1^{1-\sigma}}{(\omega-s_T(\omega;Y_1)\Omega_1)^\sigma}\notag
\end{gather}
Differentiating the above expression for $V'$, we get
\begin{gather}
 V_T''(\omega;Y_1) = \frac{\sigma Y_1^{1-\sigma}(s_T'(\omega;Y_1)\Omega_1 - 1)}{(\omega-s_T(\omega;Y_1)\Omega_1)^{1+\sigma}}.\notag
\end{gather}
\end{proof}

Next, in order to bound the derivative of $\gamma$ with respect to $\omega$, we will perform an analysis which will be aided by our ability to write $\mathcal{U}(1)$ and $\mathcal{O}(1)$ for expressions dependent on model parameters, allowing us to focus on the variables of interest.  Several of these expressions will involve factors of $\gamma$ or $1-\gamma$, and thus we provide the following sequence of technical lemmas, the net result of which is a statement slightly stronger than the fact that $0<\gamma_T(\omega;Y_1)<1$.  

We begin by observing that an agent who faces guaranteed unemployment will invest a larger fraction of his income than one who has a potential opportunity to obtain wages.  Hence we can estimate
\begin{gather}
 1-\gamma_T(\omega;Y_1) > 1-\overline{\gamma}_T(\omega; Y_1)\notag
\end{gather}
where $\overline{\gamma}_T(\omega; Y_1)$ is the savings function corresponding to the agent's problem with all employment shocks equal to 0.  It will be apparent in the proof below that the right hand side is independent of $\omega$, as should be expected due to the lack of heterogeneity in the no-employment extreme case.  We can therefore write
\begin{gather}
 1-\gamma_T(\omega;Y_1) > 1-\overline{\gamma}_T(\{Y_t\}_{t=2}^T, Y_1).\notag
\end{gather}

\begin{lemma}\label{lemma:GammaUB} We have
\begin{gather}
 \frac{1}{1-\overline{\gamma}_2(\{Y_2\}, Y_1)} = 1+\left(\beta \alpha^{1-\sigma}\E \frac{Y_{2}^{1-\sigma}}{(\Omega_{1} Y_{1})^{1-\sigma}}\right)^{1/\sigma}\notag
\end{gather}
and, for $T>2$, we have the recursive expression
\begin{gather}
 \frac{1}{1-\overline{\gamma}_T(\{Y_t\}_{t=2}^T,Y_1)} := 1+\left(\beta\alpha^{1-\sigma}\E \frac{Y_{2}^{1-\sigma}}{(\Omega_{1} Y_{1})^{1-\sigma}} \left(\frac{1}{(1-\overline{\gamma}_{T-1}(\{Y_t\}_{t=3}^T,Y_2))^\sigma}\right) \right)^{1/\sigma}.\label{eqn:GammaBound1}
\end{gather}
\end{lemma}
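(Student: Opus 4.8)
The plan is to prove both identities by induction on $T$: the base case $T=2$ comes directly from the first-order condition, and the inductive step is propagated through the Bellman recursion using the closed form for $V_{T-1}'$ furnished by Lemma~\ref{lemma:ValueFunctionDerivatives}. At each stage I also carry along the assertion made just before the lemma, that $\overline{\gamma}_T$ depends only on the aggregates $\{Y_t\}_{t=1}^T$ and not on the agent's share $\omega$; this $\omega$-independence is what lets the recursion close in the stated form, and it emerges from the same cancellation that produces the formula.

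For the base case, specialize the agent's problem \eqref{eqn:RecursiveLambda} to $T=2$ with the degenerate shock $e_2\equiv 0$; the continuation value is then $u$ evaluated at $\alpha s Y_2 = \alpha\omega\gamma Y_2/\Omega_1$. Writing the first-order condition in the $\gamma$-form \eqref{eqn:FirstOrderCondition2} with $e_2=0$ and collecting the powers of $\alpha$, $\omega$, $\Omega_1$ gives $Y_1^{1-\sigma}\omega^{1-\sigma}(1-\gamma)^{-\sigma} = \beta\alpha^{1-\sigma}\omega^{1-\sigma}\gamma^{-\sigma}\,\E\bigl[Y_2^{1-\sigma}/(\Omega_1 Y_1)^{1-\sigma}\bigr]$. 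The factor $\omega^{1-\sigma}$ cancels, which simultaneously shows $\overline{\gamma}_2$ is $\omega$-free and leaves $(\gamma/(1-\gamma))^{\sigma} = \beta\alpha^{1-\sigma}\E[Y_2^{1-\sigma}/(\Omega_1 Y_1)^{1-\sigma}]$; since $1/(1-\overline{\gamma}_2) = 1 + \overline{\gamma}_2/(1-\overline{\gamma}_2)$, this is the asserted formula.

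For the inductive step, assume the formula and the $\omega$-independence for the $(T-1)$-period no-employment problem starting at period $2$ with aggregate $Y_2$, and let $\overline{V}_{T-1}$ denote its value function. By Lemma~\ref{lemma:ValueFunctionDerivatives} applied to that problem, $\overline{V}_{T-1}'(\omega';Y_2) = Y_2^{1-\sigma}/(\omega' - \overline{s}_{T-1}(\omega';Y_2)\Omega_2)^{\sigma} = Y_2^{1-\sigma}/\bigl(\omega'(1-\overline{\gamma}_{T-1}(\{Y_t\}_{t=3}^T,Y_2))\bigr)^{\sigma}$, using $\overline{s}_{T-1}\Omega_2 = \omega'\overline{\gamma}_{T-1}$ and the inductive fact that $\overline{\gamma}_{T-1}$ is independent of $\omega'$; observe that the continuation forecast $\Omega_2$ has been absorbed into $1-\overline{\gamma}_{T-1}$ and no longer appears. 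Substituting $\omega' = \alpha\omega\gamma/\Omega_1$ and then feeding this into the first-order condition \eqref{eqn:FirstOrderCondition2} with $e_2\equiv 0$, the $\omega$-powers cancel once more; using $\Omega_1^{\sigma-1}Y_1^{\sigma-1} = (\Omega_1 Y_1)^{-(1-\sigma)}$ to tidy the constants yields $(\gamma/(1-\gamma))^{\sigma} = \beta\alpha^{1-\sigma}\,\E\bigl[Y_2^{1-\sigma}(1-\overline{\gamma}_{T-1}(\{Y_t\}_{t=3}^T,Y_2))^{-\sigma}/(\Omega_1 Y_1)^{1-\sigma}\bigr]$, and passing to $1/(1-\overline{\gamma}_T)$ as before gives \eqref{eqn:GammaBound1}. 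The cancellation of $\omega$ again shows $\overline{\gamma}_T$ is $\omega$-free, closing the induction.

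The computations are routine; the only genuine obligations are, first, to note that Lemma~\ref{lemma:ValueFunctionDerivatives} and the first lemma both remain valid for the degenerate shock $e\equiv 0$ — their proofs used only the first-order condition and the possibility-of-unemployment hypothesis, the latter being vacuously satisfied when $e\equiv 0$ — so the no-employment agent's problem does have a unique interior solution with a value function of the stated form; and second, to keep the bookkeeping of which forecast $\Omega_t$ and which aggregates $Y_t$ belong to the continuation problem straight, so that the $\Omega_2$ produced by $\overline{s}_{T-1}$ cancels cleanly. That second point is the only place an index slip would be easy, and is the ``hard part'' only in the sense of requiring care rather than any new idea.
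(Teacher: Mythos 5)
Your proposal is correct and follows essentially the same route as the paper: induction on $T$, writing the first-order condition \eqref{eqn:FirstOrderCondition2} for the no-employment problem, invoking Lemma~\ref{lemma:ValueFunctionDerivatives} for the continuation value, cancelling the $\omega^{1-\sigma}$ factors, and rearranging $(\overline{\gamma}/(1-\overline{\gamma}))^{\sigma}$ into the stated form. The only difference is that you make explicit two points the paper leaves implicit (the $\omega$-independence asserted just before the lemma, and the applicability of the earlier lemmas when $e\equiv 0$), which is a welcome tightening rather than a departure.
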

A remark before the proof: Notice that in the case of $\log$ utility, the inequality given by the lemma reduces to the significantly simpler expression
\begin{gather}
 1-\overline{\gamma_T} = \frac{1}{1+\beta+...+\beta^{T-1}}.\notag
\end{gather}

\begin{proof}
The proof is just a rewriting of first order conditions.  We proceed by induction.  

When $T=2$, the agent maximizes
\begin{gather}
 \frac{Y_1^{1-\sigma}\omega^{1-\sigma}(1-\overline{\gamma})^{1-\sigma}}{1-\sigma} + \beta \E \frac{\frac{\alpha\omega}{\Omega_1}\overline{\gamma}^{1-\sigma}Y_2^{1-\sigma}}{1-\sigma}\notag
\end{gather}
Taking first order conditions with respect to $\gamma$ results in
\begin{gather}
 \frac{Y_{1}^{1-\sigma}\omega^{1-\sigma}}{(1-\overline{\gamma})^\sigma} = \beta\left(\frac{\alpha\omega}{\Omega_1}\right)^{1-\sigma} \frac{1}{\overline{\gamma}^\sigma} \E Y_2^{1-\sigma}.\notag
\end{gather}
After some cancelling the factors involving $\omega$, taking $\sigma$th roots, and rearranging, this gives
\begin{gather}
 \frac{1}{1 - \overline{\gamma}} = 1+\left(\beta \alpha^{1-\sigma}\E \frac{Y_2^{1-\sigma}}{(\Omega_1 Y_{1})^{1-\sigma}}\right)^{1/\sigma}\notag
\end{gather}
which is the estimate for $T=2$. 

The inductive step proceeds similarly.  Suppose that we have demonstrated that $1-\overline{\gamma}_{T-1}$ can be expressed as the right hand side of (\ref{eqn:GammaBound1}).  The agent for the $T$ period case solves
\begin{gather}
 \frac{Y_1^{1-\sigma}\omega^{1-\sigma}(1-\overline{\gamma})^{1-\sigma}}{1-\sigma} + \beta \E V_{T-1}\left(\frac{\alpha\omega\overline{\gamma}}{\Omega_{1}};Y_2\right).\notag
\end{gather}
Applying Lemma \ref{lemma:ValueFunctionDerivatives}, we can write the first order conditions (in terms of $\overline{\gamma}$) as
\begin{gather}
 \frac{Y_{1}^{1-\sigma}\omega^{1-\sigma}}{(1-\overline{\gamma})^\sigma} = \beta \left(\frac{\alpha\omega}{\Omega_{1}}\right)^{1-\sigma}\frac{1}{\overline{\gamma}^\sigma}\E \frac{Y_2^{1-\sigma}}{(1-\overline{\gamma}_{T-1}(\{Y_t\}_{t=3}^T;Y_2))^\sigma}. \notag
\end{gather}
Rearranging, we have
\begin{gather}
 \left(\frac{\overline{\gamma}}{1-\overline{\gamma}}\right)^\sigma = \beta \alpha^{1-\sigma} \E \left(\frac{Y_2^{1-\sigma}}{(\Omega_{1} Y_{1})^{1-\sigma}}\left(\frac{1}{(1-\overline{\gamma}_{T-1}(\{Y_t\}_{t=3}^T;Y_2))^\sigma}\right)\right),\notag
\end{gather}
and applying the inductive hypothesis to the latter factor in each term of the expected value we are done.
\end{proof}

A sibling to the above result is the following, which will allow us to write $\mathcal{U}(1)$ for $\gamma_T(\omega,Y_1)$ itself.  We write $\E_{e_2=0} X$ for the expected value of the random variable $X$ over the event that $e_2=0$.  That is, 
$$
\E_{e_2=0} X = \E X 1_{e_2=0}
$$
where $1_A$ denotes the characteristic function of the event $A$.

\begin{lemma}\label{lemma:GammaLB} Let 
\begin{gather}
 \frac{1}{1-\underline{\gamma}_2(\{Y_2\},Y_1)} := 1+\left(\beta \alpha^{1-\sigma}\E_{e_2=0} \frac{ Y_{2}^{1-\sigma}}{(\Omega_1 Y_1)^{1-\sigma}}\right)^{1/\sigma}\notag
\end{gather}
and for $T > 2$ let
\begin{gather}
 \frac{1}{1-\underline{\gamma}_T(\{Y_t\}_{t=2}^T,Y_1)} := 1+\left(\beta\alpha^{1-\sigma}\E_{e_2=0} \frac{ Y_{2}^{1-\sigma}}{(\Omega_1 Y_1)^{1-\sigma}} \left(\frac{1}{(1-\underline{\gamma}_{T-1}(\{Y_t\}_{t=3}^T,Y_2)^\sigma}\right) \right)^{1/\sigma}\label{eqn:GammaBound2}
\end{gather}
Then
\begin{gather}
1-\gamma_T(\omega;Y_1) \leq 1-\underline{\gamma}_T(\{Y_t\}_{t=2}^T, Y_1).\notag
\end{gather}
\end{lemma}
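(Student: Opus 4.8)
The plan is to induct on $T$, following the template of the proof of Lemma~\ref{lemma:GammaUB} but with the full expectation replaced throughout by $\E_{e_2=0}$ and with nonnegative terms simply discarded in order to turn the first order condition into the desired inequality. Everything below takes place at the solution $\gamma=\gamma_T(\omega;Y_1)$ of the agent's problem, so that \eqref{eqn:FirstOrderCondition2} holds, and I abbreviate $y_2:=\frac{\alpha\omega\gamma}{\Omega_1}+(1-\alpha)e_2$ for the resulting period-$2$ wealth share. Two facts are already available: from the first lemma, $s_T\in(0,\omega/\Omega_1)$, so $0<\gamma_T(\omega;Y_1)<1$; and from its defining relation, $\underline{\gamma}_T\in[0,1)$, with $\underline{\gamma}_T>0$ once $P(e_2=0)>0$, as the risk of unemployment condition guarantees (when $\underline{\gamma}_T=0$ the claimed bound is trivial). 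Only the risk of unemployment condition is used here; similar future prospects plays no role.

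The core computation is as follows. First, by Lemma~\ref{lemma:ValueFunctionDerivatives} together with the identity $y_2-s_{T-1}(y_2;Y_2)\Omega_2=y_2\bigl(1-\gamma_{T-1}(y_2;Y_2)\bigr)$, rewrite the value-function derivative appearing in \eqref{eqn:FirstOrderCondition2} as $V_{T-1}'(y_2;Y_2)=Y_2^{1-\sigma}\,y_2^{-\sigma}\bigl(1-\gamma_{T-1}(y_2;Y_2)\bigr)^{-\sigma}$ (in the base case $T=2$ this is just $V_1'(y_2;Y_2)=Y_2^{1-\sigma}y_2^{-\sigma}$, with the convention $\underline{\gamma}_1\equiv0$). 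Second, restrict the expectation in \eqref{eqn:FirstOrderCondition2} to the event $\{e_2=0\}$; since $V_{T-1}'>0$ this only lowers the right-hand side, producing a $\ge$. On $\{e_2=0\}$ we have $y_2=\frac{\alpha\omega\gamma}{\Omega_1}$, and the inductive hypothesis $\gamma_{T-1}(y_2;Y_2)\ge\underline{\gamma}_{T-1}(\{Y_t\}_{t=3}^T;Y_2)$ gives $\bigl(1-\gamma_{T-1}(y_2;Y_2)\bigr)^{-\sigma}\ge\bigl(1-\underline{\gamma}_{T-1}(\{Y_t\}_{t=3}^T;Y_2)\bigr)^{-\sigma}$. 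Substituting these in, cancelling the common factor $\omega^{1-\sigma}$ from the two sides, and collecting the model parameters, the inequality reduces to
\[
\Bigl(\frac{\gamma}{1-\gamma}\Bigr)^{\sigma}\ \ge\ \beta\alpha^{1-\sigma}\,\E_{e_2=0}\biggl[\frac{Y_2^{1-\sigma}}{(\Omega_1Y_1)^{1-\sigma}}\,\bigl(1-\underline{\gamma}_{T-1}(\{Y_t\}_{t=3}^T;Y_2)\bigr)^{-\sigma}\biggr],
\]
whose right-hand side is precisely $\bigl(\frac{\underline{\gamma}_T}{1-\underline{\gamma}_T}\bigr)^{\sigma}$ by \eqref{eqn:GammaBound2}. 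Since $t\mapsto t^{1/\sigma}$ is increasing on $[0,\infty)$ and $x\mapsto\frac{x}{1-x}$ is increasing on $(0,1)$, and both $\gamma$ and $\underline{\gamma}_T$ lie in that interval, it follows that $\gamma_T(\omega;Y_1)\ge\underline{\gamma}_T$, i.e.\ $1-\gamma_T(\omega;Y_1)\le1-\underline{\gamma}_T(\{Y_t\}_{t=2}^T,Y_1)$. This closes the induction; the $\log$ case $\sigma=1$ is handled by the same computation in the limit $\sigma\to1$, exactly as in Lemma~\ref{lemma:GammaUB}.

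The step I expect to require the most care is the correct invocation of the inductive hypothesis: $\gamma_{T-1}\ge\underline{\gamma}_{T-1}$ must be applied at the argument $y_2=\frac{\alpha\omega\gamma}{\Omega_1}+(1-\alpha)e_2$, which itself depends on the very solution $\gamma=\gamma_T(\omega;Y_1)$ being analyzed, and with the random output $Y_2$ entering as a parameter, so the inequality is used pointwise inside $\E_{e_2=0}$ and the dependence of $\underline{\gamma}_{T-1}$ on $(\{Y_t\}_{t=3}^T,Y_2)$ must be tracked throughout. Once that bookkeeping is in place, the remainder is the same rearrangement of first order conditions already performed in Lemma~\ref{lemma:GammaUB}.
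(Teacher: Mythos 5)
Your proposal is correct and follows essentially the same route as the paper's own proof: restrict the first order condition \eqref{eqn:FirstOrderCondition2} to the event $\{e_2=0\}$ to obtain an inequality, rewrite $V_{T-1}'$ via Lemma \ref{lemma:ValueFunctionDerivatives}, invoke the inductive hypothesis to replace $1-\gamma_{T-1}$ by $1-\underline{\gamma}_{T-1}$, and rearrange as in Lemma \ref{lemma:GammaUB}. The only difference is that you spell out the final monotonicity step and the bookkeeping of the argument $y_2$ more explicitly than the paper does.
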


\begin{proof}
 In the first order condition (\ref{eqn:FirstOrderCondition2}), taking only those terms with $e_2 = 0$ we get
\begin{gather}
 \frac{Y_1^{1-\sigma}\omega^{1-\sigma}}{(1-\gamma)^\sigma} \geq \frac{\beta\alpha\omega}{\Omega_1}\E_{e_2 = 0}  V_{T-1}'\left(\frac{\alpha\omega\gamma}{\Omega_1};Y_{2}\right).\notag
\end{gather}
Applying Lemma \ref{lemma:ValueFunctionDerivatives} gives
\begin{gather}
 \frac{Y_{1}^{1-\sigma}\omega^{1-\sigma}}{(1-\gamma)^\sigma} \geq \beta \left(\frac{\alpha\omega}{\Omega_{1}}\right)^{1-\sigma}\frac{1}{\gamma^\sigma}\E_{e_2 = 0} \frac{ Y_2^{1-\sigma}}{(1-\gamma_{T-1}(\frac{\alpha\omega\gamma}{\Omega_1};Y_2))^\sigma}.\notag
\end{gather}
In the case $T=2$, $\gamma_{T-1} = \gamma_1 = 0$, and we can rearrange this as in the previous lemma.  Inducting, then, and assuming the result for $T-1$ periods, we can bound $1-\gamma_{T-1}$ from above by $1-\underline{\gamma}_{T-1}$ giving
\begin{gather}
 \frac{Y_{1}^{1-\sigma}\omega^{1-\sigma}}{(1-\gamma)^\sigma} \geq \beta \left(\frac{\alpha\omega}{\Omega_{1}}\right)^{1-\sigma}\frac{1}{\gamma^\sigma}\E_{e_2 = 0} \frac{Y_2^{1-\sigma}}{(1-\underline{\gamma}_{T-1}(\{Y_t\}_{t=3}^T,Y_2))^\sigma}.\notag
\end{gather}
Rearranging the last expression gives the general result.
\end{proof}

In practice, we will not directly apply the above lemmas, but instead refer to the following bounds, which follow from a rearrangement of terms in the previous two lemmas.

\begin{lemma} We have
\begin{gather}
1-\gamma_T(\omega,Y_1) = \mathcal{U}(1)\notag\\
\gamma_T(\omega, Y_1) = \mathcal{U}(1)\notag
\end{gather}
where the implicit constants can be made independent of $N$ and $\omega$.
\end{lemma}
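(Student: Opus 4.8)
The plan is to read both estimates off the explicit recursions of Lemmas~\ref{lemma:GammaUB} and~\ref{lemma:GammaLB}. Those lemmas, together with the discussion preceding Lemma~\ref{lemma:GammaUB}, supply the $\omega$-free sandwich $\underline{\gamma}_T(\{Y_t\}_{t=2}^T,Y_1)\le\gamma_T(\omega;Y_1)<\overline{\gamma}_T(\{Y_t\}_{t=2}^T,Y_1)$, so it suffices to bound $\overline{\gamma}_T$ away from $1$ and $\underline{\gamma}_T$ away from $0$ with constants independent of $N$ and $\omega$. Both bounding functions are assembled period by period out of the output-to-capital ratios $Y_{t+1}/(\Omega_t Y_t)$, so the first thing I would record is that each such ratio is simultaneously $\mathcal{O}(1)$ and $\mathcal{U}(1)$, uniformly in $N$ and $\omega$: the Cobb--Douglas form gives $Y_{t+1}/(\Omega_t Y_t)=z_{t+1}\bigl(L_{t+1}/(\Omega_t Y_t)\bigr)^{1-\alpha}$, an intensive quantity insensitive to the overall scale of the economy, and $z_{t+1}=\mathcal{O}(1)$ by hypothesis. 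Consequently, for every $t$, both $\E\bigl[(Y_{t+1}/(\Omega_t Y_t))^{1-\sigma}\bigr]$ and its restriction $\E_{e_{t+1}=0}\bigl[(Y_{t+1}/(\Omega_t Y_t))^{1-\sigma}\bigr]$ lie between two fixed positive constants depending only on $T,\beta,\alpha,\sigma$ and the (fixed) sequence of forecasts.

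For the upper bound on $\gamma_T$ I would induct on $T$. Writing $1-\overline{\gamma}_T=(1+X_T^{1/\sigma})^{-1}$ with $X_T=\beta\alpha^{1-\sigma}\E\bigl[(Y_2/(\Omega_1Y_1))^{1-\sigma}(1-\overline{\gamma}_{T-1})^{-\sigma}\bigr]$, the inductive hypothesis ``$1-\overline{\gamma}_{T-1}=\mathcal{U}(1)$ uniformly in all of its arguments'' makes the factor $(1-\overline{\gamma}_{T-1})^{-\sigma}$ an $\mathcal{O}(1)$ quantity, whence the first step gives $X_T=\mathcal{O}(1)$ and therefore $1-\overline{\gamma}_T=\mathcal{U}(1)$; the base case $T=2$ (where the trailing factor is absent) is immediate from the closed form in Lemma~\ref{lemma:GammaUB}. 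The sandwich then gives $1-\gamma_T(\omega;Y_1)>1-\overline{\gamma}_T=\mathcal{U}(1)$.

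For the lower bound on $\gamma_T$ I would run the same induction against Lemma~\ref{lemma:GammaLB}, now with $\underline{\gamma}_T=(X_T')^{1/\sigma}/\bigl(1+(X_T')^{1/\sigma}\bigr)$ and $X_T'=\beta\alpha^{1-\sigma}\E_{e_2=0}\bigl[(Y_2/(\Omega_1Y_1))^{1-\sigma}(1-\underline{\gamma}_{T-1})^{-\sigma}\bigr]$. Each $1-\underline{\gamma}_k$ is at most $1$ (its reciprocal is $1$ plus a nonnegative term), so $(1-\underline{\gamma}_{T-1})^{-\sigma}\ge1$, and combining the risk-of-unemployment bound $P(e_2=0)=\mathcal{U}(1)$ with the per-period lower bound from the first step yields $X_T'=\mathcal{U}(1)$; hence $\underline{\gamma}_T=\mathcal{U}(1)$ and $\gamma_T(\omega;Y_1)\ge\underline{\gamma}_T=\mathcal{U}(1)$. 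The risk-of-unemployment condition is used precisely here, to keep $\E_{e_2=0}[\,\cdot\,]$ from collapsing.

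The one delicate point is the sign of $1-\sigma$: whether $(Y_2/(\Omega_1Y_1))^{1-\sigma}$ is controlled via an upper or a lower bound on the output-to-capital ratio flips as $\sigma$ crosses $1$, which is exactly why the first step must be stated two-sidedly; the $\sigma=1$ (log) case is the degenerate limit already noted after Lemma~\ref{lemma:GammaUB}, where $1-\overline{\gamma}_T=(1+\beta+\dots+\beta^{T-1})^{-1}$ settles both bounds at once. Everything past the first step is just the propagation of fixed constants through a $T$-fold nesting, which the explicit recursions make mechanical; so I expect the first step --- confirming that the output-to-capital ratios are bounded above and below uniformly in the number of agents --- to be the one carrying the real content.
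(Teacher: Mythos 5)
Your proof is correct and follows exactly the route the paper intends: the paper gives no explicit argument for this lemma, asserting only that it ``follows from a rearrangement of terms in the previous two lemmas,'' and your write-up is precisely that rearrangement, with the sandwich $\underline{\gamma}_T\le\gamma_T<\overline{\gamma}_T$ combined with induction on the recursions of Lemmas~\ref{lemma:GammaUB} and~\ref{lemma:GammaLB}. Your identification of the two-sided uniform bound on the output-to-capital ratios (and hence, implicitly, a lower as well as the stated upper bound on $z_t$) as the step carrying the real content is a point the paper glosses over entirely, so your version is if anything more careful than the source.
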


\subsection{Derivative of the Savings Function}

We are now ready to prove \ref{thm:GammaDerivative}.   Let $T\geq 2$ (there is nothing to prove in the $T=1$ case).  We rearrange (\ref{eqn:FirstOrderCondition2}) as
\begin{gather}
 \frac{1}{(1-\gamma)^\sigma} = \frac{\beta\alpha\omega^\sigma}{\Omega_1}\frac{1}{Y_1^{1-\sigma}}\E V_{T-1}'\left(\frac{\alpha\omega\gamma}{\Omega_1} + (1-\alpha) e_2; Y_2\right).\notag
\end{gather}
As before, this condition implies a maximizing savings function $\gamma_T = \gamma_T(\omega;Y_1)$.

Our first step is to differentiate the last line implicitly with respect to $\omega$.  We get
\begin{gather} \begin{aligned}
 \frac{\sigma\gamma_T'}{(1-\gamma_T)^{1+\sigma}} = \frac{\beta\alpha\sigma\omega^{\sigma-1}}{\Omega_1}\frac{1}{Y_1^{1-\sigma}}\E  V_{T-1}'\left(\frac{\alpha\omega\gamma_T}{\Omega_1}+ (1-\alpha) e_2;Y_{2}\right)\ \ \ \ \ \ \ \ \ \ \ \ \ \ \ \ \notag\\+\frac{\beta\alpha\omega^\sigma}{\Omega_1}\frac{1}{Y_1^{1-\sigma}}\left[\frac{\alpha\gamma_T}{\Omega_1}+\frac{\alpha\omega\gamma_T'}{\Omega_1}\right]\E  V_{T-1}''\left(\frac{\alpha\omega\gamma_T}{\Omega_1} + (1-\alpha) e_2;Y_{2}\right).\notag
\end{aligned} \end{gather}
Solving for $\gamma_T'$, this gives
\begin{gather}
 \gamma_T'(\omega) = \frac{\frac{\beta\alpha\omega^{\sigma-1}}{\Omega_1}\frac{1}{Y_1^{1-\sigma}} \left[\E  \left(\sigma V_{T-1}'\left(\frac{\alpha\omega\gamma_T}{\Omega_1} + (1-\alpha) e_2; Y_{2}\right)+\frac{\alpha\omega\gamma_T}{\Omega_1}V_{T-1}''\left(\frac{\alpha\omega\gamma_T}{\Omega_1} + (1-\alpha) e_2;Y_{2}\right)\right)\right]}{\left[\frac{\sigma}{(1-\gamma_T)^{1+\sigma}} - \frac{\beta\alpha^2}{\Omega_1^2}\omega^{1+\sigma}\frac{1}{Y_1^{1-\sigma}}\E  V_{T-1}''\left(\frac{\alpha\omega\gamma_T}{\Omega_1} + (1-\alpha) e_2;Y_{2}\right)\right]}\label{eqn:GammaDerivativeUnsimplified}
\end{gather}

We can simplify the numerator a fair bit.  We let $x=\frac{\alpha\omega\gamma_T}{\Omega_1}$ and rewrite it as
\begin{gather}
 \frac{\beta\alpha}{\Omega_1 \omega^{1-\sigma}Y_1^{1-\sigma}} \E  \left(\sigma V_{T-1}'\left(x + (1-\alpha) e_2\right)+xV_{T-1}''\left(x + (1-\alpha) e_2\right)\right)\label{eqn:SavingsDerivativeNumerator}
\end{gather}
Introducing a random variable $y_2=x+(1-\alpha)e_2$ and rewriting a typical term of the last expression using Lemma \ref{lemma:ValueFunctionDerivatives} gives
\begin{gather}
\frac{\beta\alpha}{\Omega_1 \omega^{1-\sigma}Y_1^{1-\sigma}}\left[\frac{\sigma Y_{2}^{1-\sigma}}{(x+(1-\alpha)e_2 - s_{T-1}(y_2)\Omega_{2})^\sigma} + \frac{\sigma xY_{2}^{1-\sigma}(s_{T-1}'(y_2)\Omega_{2}-1)}{(x+(1-\alpha)e_2 - s_{T-1}(y_2)\Omega_{2})^{1+\sigma}}\right]\notag
\end{gather}
with $s_{T-1}$ being shorthand for the $T-1$ period decision rule, $s_{T-1}(y_2)=s_{T-1}(y_2;Y_2)$.  This simplifies further to become
\begin{gather}
\frac{\beta\alpha}{\Omega_1 \omega^{1-\sigma}Y_1^{1-\sigma}}\sigma Y_{2}^{1-\sigma}\left[\frac{(1-\alpha)e_2 - s_{T-1}(y_2)\Omega_{2} + xs_{T-1}'(y_2)\Omega_{2}}{(x+(1-\alpha)e_2 - s_{T-1}(y_2)\Omega_{2})^{1+\sigma}}\right].\label{eqn:SavingsDerivativeNumerator2}
\end{gather}

To make sense of the expression $- s_{T-1}(y_2)\Omega_{2} + xs_{T-1}'(y_2)\Omega_{2}$, we note that
\begin{gather}
 - s_{T-1}(y_2)\Omega_{2} + xs_{T-1}'(y_2)\Omega_{2} = - s_{T-1}(y_2)\Omega_{2} + y_2 s_{T-1}'(y_2)\Omega_{2} - (1-\alpha)e_2 s_{T-1}'(y_2)\Omega_{2}.\notag
\end{gather}
However, we also have the definition
\begin{gather}
 \gamma_{T-1}(y_2) = \frac{s_{T-1}(y_2)\Omega_{2}}{y_2}\notag
\end{gather}
and so differentiating with respect to $y_2$ we get
\begin{gather}
 \gamma_{T-1}'(y_2) = \frac{s_{T-1}'(y_2)\Omega_2}{y_2} - \frac{s_{T-1}(y_2)\Omega_2}{y_2^2}.\notag
\end{gather}
whereby
\begin{gather}
 -s_{T-1}(y_2)\Omega_2 + s_{T-1}'(y_2)y_2\Omega_2 = y_2^2\gamma_{T-1}'(y_2).\notag
\end{gather}
and so
\begin{gather}
 - s_{T-1}(y_2)\Omega_2 + xs_{T-1}'(y_2)\Omega_2 = y_2^2\gamma_{T-1}'(y_2) - (1-\alpha)e_2 s_{T-1}'(y_2)\Omega_2.\label{eqn:SavingsDerivativeNumerator3}
\end{gather}

Combining equations (\ref{eqn:SavingsDerivativeNumerator2}) and (\ref{eqn:SavingsDerivativeNumerator3}), we see that (\ref{eqn:SavingsDerivativeNumerator}) is equal to
\begin{gather}
\frac{\beta\alpha\sigma}{\Omega_1 \omega^{1-\sigma}Y_1^{1-\sigma}} \E \left[Y_2^{1-\sigma}\frac{(1-\alpha)e_2(1-s_{T-1}'(y_2)\Omega_2) + y_2^2 \gamma_{T-1}'(y_2)}{(y_2 - s_{T-1}(y_2)\Omega_2)^{1+\sigma}}\right],\notag
\end{gather}
and returning this to (\ref{eqn:GammaDerivativeUnsimplified}) we have
\begin{gather}
 \gamma_T' =  \frac{\frac{\beta\alpha\sigma}{\Omega_1 \omega^{1-\sigma}Y_1^{1-\sigma}}\E \left[Y_2^{1-\sigma}\frac{(1-\alpha)e_2(1-s_{T-1}'(y_2)\Omega_2) + y_2^2 \gamma_{T-1}'(y_2)}{(y_2 - s_{T-1}(y_2)\Omega_2)^{1+\sigma}}\right]}{\left[\frac{\sigma}{(1-\gamma_T)^{1+\sigma}} - \frac{\beta\alpha^2}{\Omega_1^2}\omega^{1+\sigma}\frac{1}{Y_1^{1-\sigma}}\E  V_{T-1}''\left(y_2;Y_2\right)\right]}\notag\label{eqn:GammaDerivativeEquality}
\end{gather}

Concavity of the value function implies that both terms in the denominator of the last expression are positive.  Estimating their sum by the former term, we get
\begin{gather}
\gamma_T' \leq \frac{2\beta\alpha}{\Omega_1 \omega^{1-\sigma}Y_1^{1-\sigma}}\E \left[Y_2^{1-\sigma}\frac{(1-\alpha)e_2(1-s_{T-1}'(y_2)\Omega_2) + y_2^2 \gamma_{T-1}'(y_2)}{y_2^{1+\sigma}(1 - \gamma_{T-1}(y_2))^{1+\sigma}}\right]\notag
\end{gather}
Concavity of the value function and the fact that $s_{T-1}$ is increasing implies that 
\begin{gather}
0< 1-s_{T-1}'(y_2)\Omega_2 <1\notag
\end{gather}
and in particular that $1-s_{T-1}'(y_2)\Omega_2=\mathcal{O}(1)$.  We therefore see from (\ref{eqn:GammaDerivativeEquality}) that $\gamma_T'$ itself is positive.  Moreover, absorbing factors not dependent on $N$ and $\omega$ into a single constant, we can rewrite the above bound as
\begin{gather}
\gamma_T' = \mathcal{O}\left(\frac{1}{\omega^{1-\sigma}}\E \frac{e_2 + y_2^2 \gamma_{T-1}'(y_2)}{y_2^{1+\sigma}}\right)\notag
\end{gather}
where we have used the fact that $\gamma_{T-1}$ is bounded away from 1 and the fact that aggregate shocks are bounded. 

\subsection{Proof of the Main Theorem}

Equipped with \ref{thm:GammaDerivative}, we may proceed to establish the approximate aggregation estimates \ref{thm:ApproximateAggregation}.  We wish to group our agents into bins such that any two agents within a single bin have approximately the same value of $\gamma_{T}(\omega;\{e_{j,t}\}_{t=2}^T,\{z_t\}_{t=2}^T,\{\Omega_t\}_{t=1}^{T-1},Y_{1})$.  For the current discussion, we further simplify our notation to read $\gamma_T(\omega) := \gamma_T(\omega,Y_1)$.  

Fix the number of time periods $T$ and the error parameter $\epsilon>0$.  The case in which any agent has all the wealth is trivial, so we may suppose not.  We begin by using the similar future prospects condition on the employment shocks to separate the agents into sets indexed by $A_1,...,A_s \subset \{1,...,N\}$ so that for $j,k\in A_l$ we have that the distributions of $e_{j,t}$ and $e_{k,t}$ are the same in each time period.                                                                                                                                                                                                                          

Next, for a fixed choice of the set $A_l$, we estimate the total variation of $\gamma_{T}$ as $\omega$ ranges across the interval $[0,1]$. 

\subsubsection{Variation of the Savings Function}
We proceed by induction. When $T=2$, $\gamma_{T-1} = \gamma_1 \equiv 0$.  Using this in our bound we get
\begin{gather}
\gamma_2' = \mathcal{O}\left(\frac{1}{\omega^{1-\sigma}}\E \frac{e_2}{y_2^{1+\sigma}}\right). \notag
\end{gather}
Integrating this bound over shares in $[0, 1]$, we can bound the total variation of $\gamma_2$ over this interval by
\begin{gather}
 \int_{0}^{1} \gamma_{2}'(\omega)\; d\omega = \mathcal{O}\left(\E \int_{0}^{1} \frac{e_2}{\omega^{1-\sigma}y_2^{1+\sigma}}\right)\; d\omega\notag\\
= \mathcal{O}\left(\E \left[\int_{0}^{e_2}\frac{e_2}{\omega^{1-\sigma}y_2^{1+\sigma}}\; d\omega + \int_{e_2}^1 \frac{e_2}{\omega^{1-\sigma}y_2^{1+\sigma}}\; d\omega \right]\right).\notag
\end{gather}

In the first term under the expected value, we bound $y_2 = \mathcal{U}(e_2)$, and so the contribution is
\begin{gather}
 \int_{0}^{e_2}\frac{e_2}{\omega^{1-\sigma}y_2^{1+\sigma}}\; d\omega = \mathcal{O}\left(\frac{1}{e_2^\sigma}\int_{0}^{e_2}\frac{1}{\omega^{1-\sigma}}\right)\notag\\
=\mathcal{O}\left(\frac{1}{e_2^\sigma}e_2^\sigma\right)\notag\\
=\mathcal{O}(1).\notag
\end{gather}

In the second term under the expected value, we instead bound $y_2 = \mathcal{U}(\omega)$, giving
\begin{gather}
 \int_{e_2}^1 \frac{e_2}{\omega^{1-\sigma}y_2^{1+\sigma}}\; d\omega  = \mathcal{O}\left(e_2 \int_{e_2}^1 \frac{1}{\omega^2}\; d\omega\right) \notag\\
 = \mathcal{O}\left(e_2\left(\frac{1}{e_2} - 1\right)\right)\notag\\
=\mathcal{O}(1).\notag
\end{gather}

Combining the last three calculations we get
\begin{gather}
 \int_{0}^{1} \gamma_{2}'(\omega)\; d\omega = \mathcal{O}(1)\notag
\end{gather}
as desired.  This concludes the $T=2$ case.

Let $T>2$, and suppose that the total variation bound has been established for $T-1$.  We now have
\begin{gather}
\int_{0}^{1} \gamma_{T}'(\omega)\; d\omega = \mathcal{O}\left(\E \int_{0}^{1} \left(\frac{e_2}{\omega^{1-\sigma}y_2^{1+\sigma}}+\frac{y_2^{1-\sigma} \gamma_{T-1}'(y_2)}{\omega^{1-\sigma}}\right)\; d\omega\right).\notag
\end{gather}
The first term under the integral is identical to the expression bounded in the $T=2$ case, and we need only use the inductive hypothesis to bound the second term.  Doing so requires that we have an estimate $\omega\gamma_{T-1}'(\omega) = \mathcal{O}(1)$.  This is accomplished by yet another straightforward induction, which we now pause to establish.

We have
\begin{gather}
 \omega\gamma_2'(\omega) = \mathcal{O}\left(\E\frac{e_2\omega^\sigma}{y_2^{1+\sigma}}\right)\notag\\
 = \mathcal{O}\left(\E\frac{e_2\omega^\sigma}{(\omega + e_2)^{1+\sigma}}\right)\notag\\
 = \mathcal{O}(1)\notag
\end{gather}
using the simple bound $e_2\omega^\sigma < (\omega + e_2)^{1+\sigma}$.  Proceeding inductively, assuming the bound for $\omega\gamma_{T-2}'(\omega)$, we have
\begin{gather}
 \omega\gamma_{T-1}'(\omega) = \mathcal{O}\left(\E\left[\frac{e_2\omega^\sigma}{y_2^{1+\sigma}}+\omega^\sigma y_2^{1-\sigma}\gamma_{T-2}'(y_2)\right]\right)\notag\\
= \mathcal{O}\left(\E\left[1+\frac{\omega^\sigma}{y_2^{\sigma}}\right]\right)\notag\\
= \mathcal{O}(1)\notag
\end{gather}
where in the second line we used the induction to bound $y_2^{1+\sigma}\gamma_{T-2}'(y_2)$.

Now, on the range $[0, e_2]$, we apply the above argument and bound $y_2$ as having size of order $e_2$, giving
\begin{gather}
  \int_{0}^{e_2} \frac{y_2^{1-\sigma} \gamma_{T-1}'(y_2)}{\omega^{1-\sigma}}\; d\omega = \mathcal{O}\left(e_2^{-\sigma}\int_{0}^{e_2} \frac{1}{\omega^{1-\sigma}}\; d\omega \right)\notag\\
 = \mathcal{O}\left(1\right)
\end{gather}

On the range $[e_2,1]$, we bound $y_2$ as having size of order $\omega$, giving
\begin{gather}
  \int_{e_2}^{1} \frac{y_2^{1-\sigma} \gamma_{T-1}'(y_2)}{\omega^{1-\sigma}}\; d\omega = \mathcal{O}\left(\int_{e_2}^{1} \gamma_{T-1}'(y_2)\; d\omega \right)\notag
\end{gather}
Since we have a $\mathcal{U}(1)$ bound on $\gamma_T$ and since $\gamma_T$ is increasing, we also have a $\mathcal{U}(1)$ bound on the derivative of $y_2$ with respect to $\omega$, which is given by $\frac{\alpha}{\Omega_1}(\gamma_T+\omega\gamma_T')$.  We can therefore change variables in the right hand side of the last line to get
\begin{gather}
  \int_{e_2}^{1} \frac{y_2^{1-\sigma} \gamma_{T-1}'(y_2)}{\omega^{1-\sigma}}\; d\omega = \mathcal{O}\left(\int_{e_2}^{1} \gamma_{T-1}'(y_2)\; dy_2 \right)\notag\\
=\mathcal{O}(1)\notag
\end{gather}
by the inductive hypothesis.

Combining the above calculations we see that
\begin{gather}
 \int_{0}^1 \frac{y_2^{1-\sigma} \gamma_{T-1}'(y_2)}{\omega^{1-\sigma}}\; d\omega = \mathcal{O}\left(1\right)\notag
\end{gather}
and consequently that 
\begin{gather}
 \int_{0}^{1} \gamma_{T}'(\omega)\; d\omega = \mathcal{O}(1)\notag
\end{gather}
as desired.

\subsection{Binning}
The previous calculations show that, for a fixed choice of $A_l$ (and hence the distributions of employment shocks), $\gamma_{T}'$ changes by at most $\mathcal{O}(1)$ as $\omega$ ranges across the interval $[0,1]$.  
We can therefore subdivide $[0,1]$ into subintervals $I_{m,l}=[a_{m,l},b_{m,l})$, $m=1,...,M$ such that
\begin{gather}
 M = O(1/\epsilon)\notag\\
 {\rm For\ each\ } l,\ {\rm for\ } \omega_{j_1},\omega_{j_2} \in I_{m,l}\ {\rm we\ have\ } |\gamma_{T}(\omega_{j_1})-\gamma_{T}(\omega_{j_2})|\leq \epsilon.\notag
\end{gather}
where $M$ does not depend on $N$ or the distribution of wealth.  Referencing now the wealth of the $j$th agent as $\omega_j$, we set
\begin{gather}
 B_{m,l} = \{j\in\{1,...,N\}\cap A_l:\ \omega_j\in I_{m,l} \}\notag\\
 \gamma_{m,l} = \gamma_{T} (a_{m,l})\notag\\
 Y_{m,l} = \sum_{j\in B_{m,l}} \omega_j Y_1 \notag.
\end{gather}

Given these definitions, we have 
\begin{eqnarray*}
 |\sum_{j\in B_{m,l}} \gamma_{T}(\omega_j)\omega_j Y_1 - \gamma_{m,l} Y_{m,l}| &=& |\sum_{j\in B_{m,l}} (\gamma_{T}(\omega_j)-\gamma_{m,l})\omega_j Y_1|\\ &\leq& \sum_{j\in B_{m,l}} |\gamma_{T}(\omega_j)-\gamma_{m,l}|\omega_j Y_1\\
& \leq & \epsilon Y_{m,l}\notag.
\end{eqnarray*}
Summing over $m=1,...,M$ and $l$, we get
\begin{gather}
|\sum_{j\in \{1,...,N\}} \gamma_{T}(\omega_j)\omega_j Y_1 - \sum_{m=1}^M\sum_{l} \gamma_{m,l} Y_{m,l} | \leq \epsilon \sum_{m=1}^M\sum_{l} Y_{m,l} \leq \epsilon Y_1.
\end{gather}
This is the estimate of the theorem.  Moreover, the number of groupings of agents is at most $sM = \mathcal{O}(1/\epsilon)$ as desired.

\bibliographystyle{econometrica}
\bibliography{C:/Research/Bib/OneBibToRuleThemAll}

\end{document}